\newcommand{\ud}{\mathrm{d}}
\newcommand{\proba}{\mathbf{P}}
\newcommand{\qrob}{\mathbf{Q}}
\newcommand{\R}{\mathbb{R}}
\newcommand{\Q}{\mathbb{Q}}
\newcommand{\N}{\mathbb{N}}
\newcommand{\eqdefr}{=\mathrel{\mathop:}}
\newcommand{\eqdefl}{\mathrel{\mathop:}=}
\newcommand{\e}{\mathrm{e}}
\newcommand{\ut}{\underline{t}}
\numberwithin{equation}{section}
\theoremstyle{plain}
\newtheorem{thm}[equation]{Theorem}
\newtheorem{lem}[equation]{Lemma}
\newtheorem{cor}[equation]{Corollary}
\theoremstyle{definition}
\newtheorem{rem}[equation]{Remark}
\newtheorem{exmp}[equation]{Example}
\begin{document}

\title[Stochastic integrals and conditional full support]{Stochastic integrals and \\ conditional full support}
\author{Mikko S.\ Pakkanen}
\address{
Department of Mathematics and Statistics \\
University of Helsinki\\
P.O.\ Box 68\\
FI-00014 Helsingin yliopisto \\
Finland}
\email{msp@iki.fi}
\urladdr{http://www.iki.fi/msp/}
\date{\today}

\begin{abstract}
We present conditions that imply the conditional full support (CFS) property, introduced by Guasoni, R\'asonyi, and Schachermayer [Ann.\ Appl.\
  Probab.,\ 18 (2008),\ pp.~491--520], for processes $Z \eqdefl H + K \cdot W$, where $W$ is a Brownian motion, $H$ is a continuous process, and processes $H$ and $K$ are either progressive or independent of $W$. 
Moreover, in the latter case under an additional assumption that $K$ is of finite variation, we present conditions under which $Z$ has CFS also when $W$ is replaced with a general continuous process with CFS. As applications of these results, we show that several stochastic volatility models and the solutions of certain stochastic differential equations have CFS.\end{abstract}

\keywords{Conditional full support, stochastic integral, stochastic volatility, stochastic differential equations}
\subjclass[2000]{Primary 91B28; Secondary 60H05}
\thanks{Research was supported by the Academy of Finland (project 116747) and the Finnish Cultural Foundation.}

\maketitle

\section{Introduction}

\subsection{Preliminaries}
The purpose of this paper is to show that certain stochastic integrals have the \emph{conditional full support} property, introduced by Guasoni, R\'asonyi, and Schachermayer \cite{guaso2}.
So, before stating the main results of this paper, let us recall the definition of this property. 

To this end, recall first that when $E$ is a separable metric space and $\mu : \mathscr{B}(E) \rightarrow [0,1]$ is a Borel probability measure, the \emph{support} of $\mu$, denoted by $\mathrm{supp}(\mu)$, is the (unique) minimal closed set $A \subset E$ such that $\mu(A)=1$. Let $(X_t)_{t \in [0,T]}$ be a continuous stochastic process taking values in an open interval $I\subset\R$, defined on a complete probability space $(\Omega, \mathscr{F}, \proba)$, and let $\mathbb{F}=(\mathscr{F}_t)_{t \in [0,T]}$ be a filtration on this space. Moreover, let $C_x([u,v],I)$ be the space of functions $f \in C([u,v],I)$ such that $f(u) = x \in I$. As usual, we equip the spaces $C([u,v],I)$ and $C_x([u,v],I)$, $x \in I$ with the uniform topologies.

We say that the process $X$ has \emph{conditional full support} (CFS) with respect to the filtration $\mathbb{F}$, or briefly $\mathbb{F}$-CFS, if 
\begin{enumerate}[(a)]
\item $X$ is adapted to $\mathbb{F}$,
\item for all $t \in [0,T)$ and $\proba$-almost all $\omega \in \Omega$,
\begin{equation}\label{cfsdef}
\mathrm{supp}\big(\mathrm{Law}\big[(X_u)_{u \in [t,T]}\big| \mathscr{F}_{t}\big](\omega)\big) = C_{X_{t}(\omega)}([t,T],I).
\end{equation}
\end{enumerate}
In \eqref{cfsdef}, we regard $\mathrm{Law}\big[(X_u)_{u \in [t,T]}\big| \mathscr{F}_{t}\big]$ as a regular conditional law (a random Borel probability measure, see e.g.\ \cite[pp.\ 106--107]{kalle}) on the space $C([t,T],I)$.
Arguably, the formulation of the CFS property might appear slightly complicated at first sight, but informally it simply means that when we observe $X$ from any time $t\in[0,T)$ onwards,
 $X$ still stays arbitrarily close to any continuous path in $I$ starting from $X_t$ with a positive $\mathscr{F}_t$-conditional probability.
 
Throughout this paper, when we say that some process has CFS without mentioning the filtration, we tacitly take it to be the natural filtration of the process. In fact, this is the ``weakest form'' of CFS that a process can have (see Corollary \ref{smallerfilt} and Lemma \ref{usual}).

\subsection{Main results of the paper} We shall establish CFS for processes of the form
\begin{equation}\label{Zprocess}
Z_t \eqdefl H_t + \int_0^t k_s \ud W_s, \quad t \in [0,T],
\end{equation}
where $H$ is a continuous process, the integrator $W$ is a Brownian motion, and the integrand $k$ satisfies some varying assumptions (to be clarified below). 
\labelformat{enumi}{Z$^{#1}$}

We focus on three cases, each of which requires a separate treatment. First, we study the case:
\begin{enumerate}[(Z$^1$)]
\item\label{caseindep} $H$ and $k$ are (jointly) independent of $W$.
\end{enumerate}
We find that in this case, $Z$ has CFS if the set of zeros of $t \mapsto k_t$ has zero Lebesgue measure almost surely (Theorem \ref{indepcfs}). As an application of this result, we show that several popular \emph{stochastic volatility} models---which feature leverage effects, long memory in volatility, and volatility jumps---have the CFS property. 
Next, we relax the assumption about independence, and consider the case:
\begin{enumerate}[(Z$^1$)]\setcounter{enumi}{1}
\item\label{casedep} $H$ and $k$ depend progressively on $W$ and some additional continuous process that does not anticipate $W$; and $H = \int_0^{\cdot} h_s \ud s$ is absolutely continuous.
\end{enumerate}
It turns out that in this case the simple sufficient condition of the preceeding case does not guarantee CFS---we present a very simple example, in which $k$ is stricly positive but $Z$ fails to have CFS. However, we show that under a set of more stringent assumptions---which are satisfied e.g.\ when $k$ is bounded from above and away from zero, and $h$ is bounded---the process $Z$ does have CFS (Theorem \ref{progcfs}). Using this result, we are able to establish CFS for the solutions of certain stochastic differential equations.
Finally, we consider briefly the following (partial) extension of the case \eqref{caseindep}:
\begin{enumerate}[(Z$^1$)]\setcounter{enumi}{2}
\item\label{casegen} A (general) continuous process $X$ with CFS takes place of $W$; processes $H$ and $k$ are (jointly) independent of $X$; and $k$ is of finite variation.
\end{enumerate}
We show that in this case, $Z$ has CFS if each path of $k$ is bounded away from zero (Theorem \ref{gencfs}).

\subsection{Motivation} 

As mentioned before, the CFS property was first introduced by Guasoni, R\'asonyi, and Schachermayer \cite{guaso2}, in connection to mathematical finance, viz.\ pricing models with \emph{transaction costs}. Their main result asserts that if a continuous price process has CFS, then for any $\varepsilon>0$ there exists a so-called \emph{$\varepsilon$-consistent price system}, which is a martingale (after an equivalent change of measure) that shadows the price process within the bid--ask spread implied by $\varepsilon$-sized proportional transaction costs (see also \cite{kaban} for a related study). The existence of $\varepsilon$-consistent price systems for all $\varepsilon > 0$ implies that the price process does not admit arbitrage opportunities under arbitrary small transaction costs---since any arbitrage strategy would generate arbitrage also in the consistent price system, which is a contradiction because of the martingale property. Consistent price systems can be seen as generalizations of \emph{equivalent martingale measures} (EMM's), since if a price process admits an EMM, then the price process itself qualifies as a trivial $\varepsilon$-consistent price system for any $\varepsilon>0$.   

However, CFS is worth studying even when it comes to price processes that admit EMM's, since it enables the construction of specific consistent price systems that are useful in solving \emph{superreplication} problems under proportional transaction costs. This is manifested by the ``face-lifting'' result in \cite{guaso2}, which says that if $(P_t)_{t \in [0,T]}$ is a price process with CFS, then the superreplication price of any European-style vanilla contingent claim $g(P_T)$ under $\varepsilon$-sized proportional transaction costs tends to $\hat{g}(P_0)$ when $\varepsilon \downarrow 0$, where $\hat{g}$ is the \emph{concave envelope} of $g$ (informally, the ``smallest'' concave function that majorizes $g$). This means that superreplicating e.g.\ a European call option under small proportional transaction costs entails buying the underlying, the trivial superreplicating portfolio.

In addition models with transaction costs, CFS has been found to be have relevance also to frictionless pricing models, as indicated in the recent paper of Bender, Sottinen, and Valkeila \cite{bende}. Their result asserts that if a continuous price process has CFS (they use the name \emph{conditional small ball property}) and \emph{pathwise quadratic variation}, then it does not admit arbitrage opportunities in a class of trading strategies that is somewhat narrower than what the classical models allow, but nevertheless covers a large share (if not almost all) of the strategies that have practical relevance.

Aside from having these applications in mathematical finance, CFS is an interesting fundamental property from a purely mathematical point of view. In particular, research on the CFS property can be seen as a natural continuation to the classical studies of the supports of the laws of continuous Gaussian processes, by Kallianpur \cite{kalli}, and diffusions, initiated by Stroock and Varadhan \cite{stroo} and continued by several other authors (see e.g.\ \cite{mille} and the references therein).

\subsection{Previous results} So far, a variety of continuous processes, which are non-degenerate in some sense, have been shown to have CFS.

Gaussian processes that have CFS include \emph{fractional Brownian motion}, with any Hurst index $\mathfrak{h} \in (0,1)$ (and specifically when $\mathfrak{h}=1/2$, \emph{standard} Brownian motion) \cite{guaso2}, and more generally all \emph{Brownian moving averages} with non-vanishing kernels \cite{chern}. Moreover, Gaussian processes with \emph{stationary increments} that satisfy a certain spectral density  condition have CFS \cite{gasba}.

In the case of continuous Markov processes, showing CFS reduces to showing that the support of the (unconditional) law of the process is the largest possible, as pointed out in \cite{guaso2}. Hence, the classical results concerning the supports of diffusions (e.g.\ \cite{mille,stroo}) can be used to establish CFS.

Moreover, it was shown in \cite{guaso2}, that if continuous process $X$ has CFS, then the Riemann integral process $\int_0^{\cdot}X_t \ud t$ has CFS, which allows (using iteration) the construction of processes that have CFS and arbitrarily smooth paths.

While many examples of common Gaussian and Markovian processes have CFS, this observation should not be extrapolated too much, since CFS is a not a trivial property, due to its \emph{functional} nature. 
For example, non-degeneracy of finite dimensional conditional laws does not typically, even in the case of Gaussian processes, guarantee CFS. A striking example of this difficulty is the continuous Gaussian process $(X_t)_{t \in [0,1]}$ constructed by Cherny \cite[Example 3.1]{chern}, which satisfies
$\mathbf{Var}[X_{t}|\mathscr{F}^X_s]>0$ a.s.\ for all $t,s \in [0,1]$ such that $t>s$, but nevertheless $\int_0^1 X_t \ud t = 1$ almost surely---implying that $X$ cannot have CFS.

\subsection{Outline of the paper} 

Section \ref{basicresults} contains some basic results on the CFS property, to familiarize the reader with the property, and to facilitate subsequent proofs. Section \ref{mainresults} contains the main results concerning stochastic integrals. Finally, Section \ref{applications} concludes with applications to the aforementioned more specific processes: stochastic volatility models and solutions of stochastic differential equations.

\subsection{Notations and conventions}\label{notat} 

Let $\mathbb{T}\subset [0,\infty)$ be a left-closed interval and $(X_t)_{t \in \mathbb{T}}$ a generic stochastic process on $(\Omega,\mathscr{F},\proba)$. We say that $X$ is \emph{jointly measurable}, if $(t,\omega) \mapsto X_t(\omega)$ is measurable with respect to $\mathscr{B}(\mathbb{T})\otimes \mathscr{F}$. Throughout this paper, we assume implicitly that all processes are jointly measurable, whenever this is not already implied by continuity (from left or right) or progressive measurability.
For any $t \in \mathbb{T}$, we write $\widehat{X}^t\eqdefl (X_s - X_t)_{s \geq t}$.

We denote by $\tilde{\mathbb{F}}^X = (\tilde{\mathscr{F}}^X_t)_{t \in \mathbb{T}}$ the ``raw'' natural filtration of $X$ and by $\mathbb{F}^X = (\mathscr{F}^X_t)_{t \in \mathbb{T}}$ its usual augmentation (the minimal right-continuous augmentation of $\tilde{\mathbb{F}}^X$ such that $\mathscr{F}^X_{\min\mathbb{T}}$ contains all $\proba$-null sets in $\tilde{\mathscr{F}}^X_t$ for all $t \in \mathbb{T}$, see e.g.\ \cite[p.\ 45]{revuz}).

As usual, $\| \cdot \|_{\infty}$ denotes the sup-norm, and for any $f,g \in C(\mathbb{T})\eqdefl C(\mathbb{T},\R)$ and $r>0$, write $B(g,r) \eqdefl \{h \in C(\mathbb{T}) : \|h-g\|_{\infty} <r\}$ and $I(f,g,r) \eqdefl \mathbf{1}_{B(g,r)}(f)$.

Finally, $\R_+ \eqdefl (0,\infty)$, $\Q_+ \eqdefl \Q \cap \R_+$, $\N \eqdefl \{ 0,1,\ldots\}$, $\mathbb{Z}_+ \eqdefl \N \setminus \{0\}$, and $\lambda$ stands for the Lebesgue measure on $\R$.

\section{Basic results on the conditional full support property}\label{basicresults}

Since CFS is a very recent concept, in the absence of any comprehensive account, it is instructive to present a few basic results that can be used to establish the property. 
We will consider processes and their CFS in the largest possible state space $\R$, but this is not really a restriction, since all of the following results, except Lemma \ref{smallballprobs}, can be applied also to processes in smaller state spaces using the following observation. 

\begin{rem}
If $I \subset \R$ is an open interval and $f : \R \rightarrow I$ is a homeomorphism, then $g \mapsto f \circ g$ is a homeomorphism between $C_x([0,T])$ and $C_{f(x)}([0,T],I)$. Hence, for $f(X)$, understood as a process in $I$, we have
\begin{equation}\label{gendomain}
\textrm{$f(X)$ has $\mathbb{F}$-CFS} \quad \Leftrightarrow \quad \textrm{$X$ has $\mathbb{F}$-CFS.}
\end{equation}
\end{rem}

We begin with an alternative ``small-ball'' characterization of CFS, which is more tractable than the original definition \eqref{cfsdef}.

\begin{lem}[Small-ball probabilities]\label{smallballprobs} Let $(X_t)_{t \in [0,T]}$ be a continuous process, adapted to filtration $\mathbb{F}=(\mathscr{F}_t)_{t \in [0,T]}$.
Then, $X$ has $\mathbb{F}$-CFS if and only if
\begin{equation}\label{smallballexpect}
\mathbf{E}\big[I\big(\widehat{X}^t,f,\varepsilon\big)\big|\mathscr{F}_t\big] > 0 \quad \textrm{a.s.}
\end{equation}
for all $t \in [0,T)$, $f \in C_0([t,T])$, and $\varepsilon > 0$.
\end{lem}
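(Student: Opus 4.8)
The plan is to translate the support condition \eqref{cfsdef} into a statement about the conditional measures of open balls, and then to cut the uncountable family of balls down to a countable dense one so that the exceptional null sets can be controlled. The bridge between the two formulations is a single identity, which I would establish first. Since $X_t$ is $\mathscr{F}_t$-measurable, the regular conditional law $\mathrm{Law}\big[(X_u)_{u\in[t,T]}\big|\mathscr{F}_t\big](\omega)$ is, for $\proba$-almost every $\omega$, concentrated on paths $g$ with $g(t)=X_t(\omega)$, hence on $C_{X_t(\omega)}([t,T])$. Moreover, every $g\in C_{X_t(\omega)}([t,T])$ may be written uniquely as $g=X_t(\omega)+f$ with $f\in C_0([t,T])$, and since $\|(X_u)_{u\in[t,T]}-X_t-f\|_{\infty}=\|\widehat{X}^t-f\|_{\infty}$, freezing $X_t$ at its value shows that for each fixed $t$, $f$, $\varepsilon$ the conditional measure of the ball $B(X_t(\omega)+f,\varepsilon)$ under this law equals $\mathbf{E}\big[I\big(\widehat{X}^t,f,\varepsilon\big)\big|\mathscr{F}_t\big](\omega)$ for almost every $\omega$.

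The direction CFS $\Rightarrow$ \eqref{smallballexpect} is then essentially immediate. Fixing $t$, $f$ and $\varepsilon$, for almost every $\omega$ the support equals $C_{X_t(\omega)}([t,T])$, so the path $X_t(\omega)+f$ lies in the support and each of its neighborhoods, in particular the ball $B(X_t(\omega)+f,\varepsilon)$, has positive conditional measure; by the identity above this is exactly the positivity asserted in \eqref{smallballexpect}.

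The substantive direction is \eqref{smallballexpect} $\Rightarrow$ CFS, and here I expect the main obstacle to be the exchange of quantifiers: \eqref{smallballexpect} furnishes positivity for each fixed $(f,\varepsilon)$ outside a null set that may depend on $(f,\varepsilon)$, whereas CFS demands, outside a \emph{single} null set, that \emph{all} balls have positive mass simultaneously. To resolve this I would fix $t$, choose a countable dense set $D\subset C_0([t,T])$ (the space is separable in the uniform topology), and for each $f\in D$ and $\varepsilon\in\Q_+$ invoke \eqref{smallballexpect} to get a full-measure event on which the corresponding conditional expectation is positive. Intersecting these countably many events produces one full-measure set $\Omega_0$ on which every ball $B(X_t(\omega)+f,\varepsilon)$ with $f\in D$, $\varepsilon\in\Q_+$ has positive conditional measure. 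A standard separable-metric-space argument then upgrades this to full support: were some nonempty open subset of $C_{X_t(\omega)}([t,T])$ of zero measure, it would contain a ball $B(X_t(\omega)+f,\varepsilon)$ with $f\in D$ and $\varepsilon\in\Q_+$ of zero measure, a contradiction. Together with the containment of the support in $C_{X_t(\omega)}([t,T])$ recorded in the first step, this yields \eqref{cfsdef}.

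The only genuinely delicate points, aside from this null-set bookkeeping, are the measurable construction of the regular conditional law and the justification of the freezing identity; both are routine once the $\mathscr{F}_t$-measurability of $X_t$ is used, and the topological reduction to $D\times\Q_+$ is elementary.
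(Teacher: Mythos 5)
Your proof is correct and takes essentially the same approach as the paper's: both reduce the support equality to positivity of conditional measures of balls with centers in a countable dense set and rational radii, using separability of $C_0([t,T])$ and the disintegration theorem to identify those measures with the conditional expectations in \eqref{smallballexpect}. The only cosmetic difference is that the paper shifts the law (working with the conditional law of $\widehat{X}^t$ and noting that its support is $C_0([t,T])$ iff the support of the unshifted law is $C_{X_t(\omega)}([t,T])$), whereas you keep the law of $(X_u)_{u \in [t,T]}$ and instead shift the ball centers by the frozen value $X_t(\omega)$ — the same disintegration argument justifies both.
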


\begin{proof} Let $t \in [0,T)$ be fixed. For brevity, denote $\mu_{\omega} \eqdefl \mathrm{Law} \big[(X_u)_{u \in [t,T]}  \big| \mathscr{F}_t \big](\omega)$ and  $\hat{\mu}_{\omega} \eqdefl \mathrm{Law} \big[\widehat{X}^t \big| \mathscr{F}_t \big](\omega)$ for all $\omega \in \Omega$.
It is straightforward to check that
\begin{equation}\label{shiftsupport}
\mathrm{supp}(\hat{\mu}_{\omega}) = C_0([t,T]) \quad \Leftrightarrow \quad \mathrm{supp}(\mu_{\omega}) = C_{X_t(\omega)}([t,T]).
\end{equation}
The space $C_0([t,T])$ is separable (e.g.\ by the Stone--Weierstrass theorem), so there exists a countable dense family $\{f_n : n \in \N \} \subset C_0([t,T])$. Hence, the equality on the left hand side of \eqref{shiftsupport} holds if and only if $\hat{\mu}_{\omega}(B(f_n,q))>0$ for all $n \in \N$ and $q \in \Q_+$. By virtue of countability, we find that
$\proba[\mathrm{supp}(\hat{\mu}_{\cdot}) = C_0([t,T])] = 1$ if and only if 
\begin{equation}\label{countability}
\proba[\hat{\mu}_{\cdot}(B(f_n,q))>0] = 1 \quad \textrm{for all $n \in \N$ and $q \in \Q_+$.}
\end{equation}
By the disintegration theorem (Theorem 6.4 of \cite{kalle}), we have $\hat{\mu}_{\cdot}(B(f_n,q)) = \mathbf{E}\big[I\big(\widehat{X}^t,f_n,q\big)\big|\mathscr{F}_t\big]$ a.s., so \eqref{countability} is clearly equivalent to the asserted condition \eqref{smallballexpect}.
\end{proof}

\begin{rem}
While Lemma \ref{smallballprobs} is somewhat obvious, it has two very important consequences. 

Firstly, we note that the characterization \eqref{smallballexpect} is stated in terms of conditional \emph{expectations}. Hence, CFS does not hinge on any particular choice of \emph{versions} of the regular conditional \emph{laws}.

Secondly, whenever we want to argue contrapositively and assume that the CFS property fails to hold, Lemma \ref{smallballprobs} guarantees that there exists a \emph{fixed} ball $B(f,\varepsilon)$ such that $\widehat{X}^t$ exits $B(f,\varepsilon)$ with positive probability. Ignoring separability, the definition of CFS alone would then only imply existence of a \emph{random} balls $B(f(\omega),\varepsilon(\omega))$ with the same property, which would cause certain complications (primarily, the need to find a measurable selection of these balls).
\end{rem}

Thus, establishing CFS reduces to checking that certain conditional expectations are positive.  It is sometimes easier to show positivity of a conditional expectation by arguing that the analogous conditional expectation with respect to some \emph{larger} $\sigma$-algebra is positive, and then pass to the original $\sigma$-algebra using the following elementary fact.

\begin{lem}[Positivity]\label{positivece}
Let $\mathscr{G}$ and $\mathscr{H}$ be $\sigma$-algebras such that $\mathscr{G} \subset \mathscr{H}$, and $Y\in L^1$ such that $Y \geq 0$. If $\mathbf{E}[Y|\mathscr{H}]>0$ a.s., then $\mathbf{E}[Y|\mathscr{G}]>0$ a.s.
\end{lem}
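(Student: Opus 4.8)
The plan is to reduce everything to the elementary fact that a nonnegative integrable random variable whose integral vanishes is almost surely zero, applied over a well-chosen event. First I would record the sign information coming from $Y \geq 0$: both $\mathbf{E}[Y|\mathscr{G}]$ and $\mathbf{E}[Y|\mathscr{H}]$ are nonnegative a.s., so the desired conclusion $\mathbf{E}[Y|\mathscr{G}] > 0$ a.s.\ is equivalent to $\proba[A] = 0$, where $A \eqdefl \{\mathbf{E}[Y|\mathscr{G}] = 0\}$. Since $\mathbf{E}[Y|\mathscr{G}]$ is $\mathscr{G}$-measurable we have $A \in \mathscr{G}$, and hence $A \in \mathscr{H}$ as well, by the inclusion $\mathscr{G} \subset \mathscr{H}$.

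The key step is to evaluate $\mathbf{E}[Y \mathbf{1}_A]$ in two ways via the defining property of conditional expectation. On the one hand, because $A \in \mathscr{G}$ and $\mathbf{E}[Y|\mathscr{G}]$ vanishes on $A$ by construction,
\[
\mathbf{E}[Y \mathbf{1}_A] = \mathbf{E}\big[\mathbf{1}_A \, \mathbf{E}[Y|\mathscr{G}]\big] = 0.
\]
On the other hand, because $A \in \mathscr{H}$,
\[
\mathbf{E}[Y \mathbf{1}_A] = \mathbf{E}\big[\mathbf{1}_A \, \mathbf{E}[Y|\mathscr{H}]\big].
\]

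Combining the two displays gives $\mathbf{E}\big[\mathbf{1}_A \, \mathbf{E}[Y|\mathscr{H}]\big] = 0$. The integrand $\mathbf{1}_A \, \mathbf{E}[Y|\mathscr{H}]$ is nonnegative, so it must vanish a.s.; but by hypothesis $\mathbf{E}[Y|\mathscr{H}] > 0$ a.s., which forces $\mathbf{1}_A = 0$ a.s., i.e.\ $\proba[A] = 0$. This is precisely the claimed conclusion $\mathbf{E}[Y|\mathscr{G}] > 0$ a.s.

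I do not anticipate any genuine obstacle: this is a soft measure-theoretic fact. The only points requiring (minimal) care are the measurability bookkeeping that places $A$ in both $\sigma$-algebras---which is exactly where the inclusion $\mathscr{G} \subset \mathscr{H}$ is used---and the fact that all identities and inequalities involving conditional expectations hold only up to null sets, so the entire argument must be read in the ``a.s.'' sense.
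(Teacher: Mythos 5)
Your proof is correct. The paper in fact states this lemma as an elementary fact and omits its proof entirely, so there is no argument of the author's to compare against; your argument---evaluating $\mathbf{E}[Y\mathbf{1}_A]$ in two ways on the event $A \eqdefl \{\mathbf{E}[Y|\mathscr{G}]=0\}$, which lies in $\mathscr{G}$ and hence in $\mathscr{H}$, and then using nonnegativity to force $\proba[A]=0$---is the standard one, and every step, including the initial reduction of the a.s.\ strict positivity claim to $\proba[A]=0$, is sound.
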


Combining Lemmas \ref{smallballprobs} and \ref{positivece} we find that, like the semimartingale property, CFS is preserved when the filtration is shrinked, as long as the process is adapted to the smaller filtration (this observation is not really new, it was already employed e.g.\ in \cite{bende} and \cite{chern}). 

\begin{cor}[Smaller filtration]\label{smallerfilt} 
Let $(X_t)_{t\in [0,T]}$ be a continuous process, adapt\-ed to filtrations $\mathbb{F}=(\mathscr{F}_t)_{t \in [0,T]}$ and $\mathbb{G}=(\mathscr{G}_t)_{t \in [0,T]}$ that satisfy $\mathscr{G}_t \subset \mathscr{F}_t$ for all $t \in [0,T]$. Then, if $X$ has $\mathbb{F}$-CFS, then it has also $\mathbb{G}$-CFS. 
\end{cor}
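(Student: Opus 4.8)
The plan is to reduce both the hypothesis and the desired conclusion to the small-ball characterization of Lemma \ref{smallballprobs}, and then transfer positivity of the relevant conditional expectations from the larger filtration $\mathbb{F}$ to the smaller one $\mathbb{G}$ by means of the elementary positivity principle in Lemma \ref{positivece}. Condition (a) of the CFS definition for $\mathbb{G}$ is immediate, since $X$ is assumed to be $\mathbb{G}$-adapted; the whole content is therefore to verify the small-ball condition \eqref{smallballexpect} with $\mathscr{F}_t$ replaced by $\mathscr{G}_t$.

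Concretely, I would fix $t \in [0,T)$, $f \in C_0([t,T])$ and $\varepsilon > 0$, and set $Y \eqdefl I\big(\widehat{X}^t,f,\varepsilon\big)$. By continuity of $X$, the shifted path $\widehat{X}^t$ is a well-defined random element of $C_0([t,T])$, so $Y$ is a $\{0,1\}$-valued random variable, in particular nonnegative and in $L^1$. Since $X$ has $\mathbb{F}$-CFS, Lemma \ref{smallballprobs} provides $\mathbf{E}[Y \mid \mathscr{F}_t] > 0$ a.s.

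To conclude, I would invoke Lemma \ref{positivece} with $\mathscr{H} = \mathscr{F}_t$ and $\mathscr{G} = \mathscr{G}_t$: the inclusion $\mathscr{G}_t \subset \mathscr{F}_t$ together with $\mathbf{E}[Y \mid \mathscr{F}_t] > 0$ a.s.\ yields $\mathbf{E}[Y \mid \mathscr{G}_t] > 0$ a.s. As $t$, $f$ and $\varepsilon$ were arbitrary, the condition \eqref{smallballexpect} holds relative to $\mathbb{G}$, and a final application of Lemma \ref{smallballprobs} shows that $X$ has $\mathbb{G}$-CFS. I do not anticipate any genuine obstacle here; the only point requiring a moment's care is that the small-ball indicator $Y$ is a bona fide nonnegative $L^1$ random variable, which holds trivially because it is bounded, so that Lemma \ref{positivece} applies verbatim.
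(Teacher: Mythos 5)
Your proposal is correct and follows exactly the paper's own route: the paper derives Corollary \ref{smallerfilt} precisely by combining the small-ball characterization of Lemma \ref{smallballprobs} with the positivity transfer of Lemma \ref{positivece}, just as you do. The only difference is that the paper leaves these details implicit, while you have written them out in full, including the (trivial but worth noting) integrability of the indicator $I\big(\widehat{X}^t,f,\varepsilon\big)$.
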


Next we shall show a result to the opposite direction, namely that CFS is preserved when the filtration is augmented the usual way (see e.g.\ \cite[p.\ 45]{revuz})

\begin{lem}[Usual augmentation]\label{usual} Let $(X_t)_{t \in [0,T]}$ be a continuous process, adapted to filtration $\mathbb{F}=(\mathscr{F}_t)_{t \in[0,T]}$. Then, $X$ has $\mathbb{F}$-CFS if and only if it has CFS with respect to the usual augmentation of $\mathbb{F}$.
\end{lem}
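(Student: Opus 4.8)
The statement is an ``if and only if,'' and one implication is immediate: since the usual augmentation $\bar{\mathbb F}=(\bar{\mathscr F}_t)_{t\in[0,T]}$ satisfies $\mathscr F_t\subseteq\bar{\mathscr F}_t$ for every $t$, and $X$, being $\mathbb F$-adapted, is automatically $\bar{\mathbb F}$-adapted, Corollary \ref{smallerfilt} shows at once that $\bar{\mathbb F}$-CFS implies $\mathbb F$-CFS. So the whole content lies in the converse, and the plan is to assume $\mathbb F$-CFS and deduce $\bar{\mathbb F}$-CFS. By Lemma \ref{smallballprobs} this reduces to proving, for each fixed $t\in[0,T)$, $f\in C_0([t,T])$ and $\varepsilon>0$, that $\mathbf E[Y|\bar{\mathscr F}_t]>0$ a.s., where $Y\eqdefl I(\widehat X^t,f,\varepsilon)\geq 0$. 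Note that Lemma \ref{positivece} runs the wrong way here, so I cannot simply transport positivity from $\mathscr F_t$ to the larger $\bar{\mathscr F}_t$.

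The guiding idea is that $\bar{\mathscr F}_t$ enlarges $\mathscr F_t$ only by $\proba$-null sets and by the right-germ $\bigcap_{s>t}\mathscr F_s$, and that this germ cannot destroy the small-ball positivity because, by path continuity, the ``past'' part of the ball on $[t,s]$ becomes trivially satisfied as $s\downarrow t$. Concretely, for $s\in(t,T)$ I would introduce the $\mathscr F_s$-measurable event $A_s\eqdefl\{\|\widehat X^t-f\|_{\infty,[t,s]}<\varepsilon/2\}$ and the deterministic function $\phi_s(u)\eqdefl f(u)-f(s)$, $u\in[s,T]$, which lies in $C_0([s,T])$. Splitting $\widehat X^t(u)-f(u)=(\widehat X^s(u)-\phi_s(u))+(X_s-X_t-f(s))$ on $[s,T]$ and noting that $|X_s-X_t-f(s)|<\varepsilon/2$ holds on $A_s$, a one-line triangle-inequality check yields the pointwise domination $Y\geq\mathbf 1_{A_s}\,I(\widehat X^s,\phi_s,\varepsilon/2)$.

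The main step is then an exhaustion argument. Let $N\eqdefl\{\mathbf E[Y|\bar{\mathscr F}_t]=0\}\in\bar{\mathscr F}_t$; since $Y\geq 0$, integrating over $N$ forces $Y=0$ a.s.\ on $N$. Fix $s\in(t,T)$. Because $\bar{\mathscr F}_t\subseteq\mathscr F_s\vee\mathscr N$, with $\mathscr N$ the $\proba$-null sets, and since augmentation by null sets leaves conditional expectations unchanged, I may replace $N$ by an $\mathscr F_s$-set modulo a null set and compute, using the domination above together with $\mathbb F$-CFS at time $s$ (Lemma \ref{smallballprobs} applied to $\phi_s$ and $\varepsilon/2$),
\[
0=\int_N Y\,\ud\proba\ \geq\ \int_{N\cap A_s}\mathbf E\big[I(\widehat X^s,\phi_s,\varepsilon/2)\,\big|\,\mathscr F_s\big]\,\ud\proba,
\]
where the integrand is strictly positive a.s. This forces $\proba(N\cap A_s)=0$ for every $s\in(t,T)$. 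Finally, choosing $s_n\downarrow t$ and using continuity of the paths of $\widehat X^t-f$ at $t$, where the increment vanishes, the events $A_{s_n}$ increase to $\Omega$ almost surely, whence $\proba(N)=\lim_n\proba(N\cap A_{s_n})=0$, which is precisely $\mathbf E[Y|\bar{\mathscr F}_t]>0$ a.s.

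I expect the one genuinely delicate point to be this transfer of positivity to the larger $\sigma$-algebra $\bar{\mathscr F}_t$: conditioning on more information can turn a strictly positive conditional expectation into one that vanishes on a set of positive measure, so everything hinges on exploiting the specific structure of the usual augmentation, namely that the only real enlargement beyond the null sets is the right-germ, which the $s\downarrow t$ exhaustion tames via path continuity. By contrast, the triangle-inequality domination and the invariance of conditional expectations under null-set augmentation are routine and I would state them with minimal fuss.
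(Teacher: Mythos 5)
Your proof is correct, and it takes a genuinely different route from the paper's. The paper proves the nontrivial direction contrapositively and in two stages: it first notes, via a monotone class argument, that adjoining the $\proba$-null sets leaves conditional expectations (hence CFS) unchanged, and then shows that if CFS fails for the right-continuous filtration $(\mathscr{F}_{t+})_{t\in[0,T]}$ at a time $\ut$ --- with a fixed ball $B(f,\varepsilon)$ supplied by Lemma \ref{smallballprobs} --- then $\mathbb{F}$-CFS must fail at a slightly later time $\ut+1/m$. That implementation uses the exit stopping time $\tau$ of the tube, the events $A_n=\{\tau>\ut+1/n\}\cap A$, a recentred function $\tilde f$, and a \emph{random} residual radius $\tilde\varepsilon$, which then requires a second discretization $B_n=A_m\cap\{\tilde\varepsilon>1/n\}$. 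You argue in the direct (dual) direction: assuming $\mathbb{F}$-CFS, you establish positivity of $\mathbf{E}[Y|\bar{\mathscr{F}}_t]$ from the pointwise domination $Y\geq\mathbf{1}_{A_s}\,I(\widehat{X}^s,\phi_s,\varepsilon/2)$, the positivity of the $\mathscr{F}_s$-conditional small-ball probabilities at every $s\in(t,T)$, and the exhaustion $A_{s_n}\uparrow\Omega$ as $s_n\downarrow t$; moreover you treat the null sets and the right-germ in a single step through the inclusion $\bar{\mathscr{F}}_t\subseteq\mathscr{F}_s\vee\mathscr{N}$, rather than in two separate reductions. The underlying insight is the same in both proofs --- step forward to $s>t$, recentre $f$ there, and let continuity of the paths at $t$ absorb the germ $\sigma$-algebra --- but your version eliminates the stopping time and the random radius in favour of a fixed half-radius $\varepsilon/2$ and one monotone limit, which makes it shorter and arguably cleaner; what the paper's contrapositive formulation displays more explicitly is the quantitative propagation of failure (failure at $\ut$ for the augmented filtration forces failure at the concrete later time $\ut+1/m$ for $\mathbb{F}$), whereas your argument packages that same mechanism into the single inequality $0=\int_N Y\,\ud\proba\geq\int_{N\cap A_s}\mathbf{E}\big[I(\widehat{X}^s,\phi_s,\varepsilon/2)\,\big|\,\mathscr{F}_s\big]\,\ud\proba$.
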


\begin{proof}
The ``if'' part follows from Corollary \ref{smallerfilt}. Moreover, for the ``only if'' part, it follows from a simple monotone class argument that adding the $\proba$-null sets in $\mathscr{F}_T$ to the filtration does not alter conditional expectations, and hence by Lemma \ref{smallballprobs}, CFS remains intact. Thus, it suffices to show that passing to the right-continuous augmentation $(\mathscr{F}_{t+})_{t \in [0,T]}$ preserves CFS.

To this end, we shall argue contrapositively, that if CFS with respect to $(\mathscr{F}_{t+})_{t \in [0,T]}$ fails at time $\ut$, then CFS with respect to $\mathbb{F}$ must fail at some time $\ut+\varepsilon$, where $\varepsilon>0$ is small. So, when $X$ does not have $(\mathscr{F}_{t+})_{t \in [0,T]}$-CFS, by Lemma \ref{smallballprobs} there exist $\ut \in [0,T)$,  $f \in C_0([\ut,T])$, and $\varepsilon > 0$, such that
$
\proba[A]>0,
$
where $A \eqdefl \left\{ \mathbf{E}\big[I\big(\widehat{X}^{\ut},f,\varepsilon\big) \big| \mathscr{F}_{\ut+} \big] = 0\right\}$. Define
\begin{equation*}
\tau \eqdefl \inf \{ t \in (\ut,T] : |X_t - X_{\ut} - f(t)| \geq \eta \},
\end{equation*}
where $\inf \varnothing \eqdefl \infty$, by convention. Obviously $\mathbf{E}[\mathbf{1}_{\{ \tau < \infty\}} |\mathscr{F}_{\ut+}]=1$ a.e.\ on $A$.
For any $n \in \mathbb{Z}_+$, let us define $A_n \eqdefl \{ \tau > \ut + 1/n \} \cap A$, and note that $A_n \in \mathscr{F}_{\ut + 1/n}$, since $\tau$ is a stopping time with respect to $(\mathscr{F}_{t})_{t \in [\ut,T]}$, and since $A \in \mathscr{F}_{\ut+} \subset \mathscr{F}_{\ut + 1/n}$. Moreover, since $A_n \uparrow A$, we have $\proba[A_m] > 0$ for some $m \in \mathbb{Z}_+$ such that $\ut + 1/m <T$. 
Clearly,
\begin{equation*}
\mathbf{E}[\mathbf{1}_A \mathbf{E}[\mathbf{1}_{\{ \tau < \infty\}}| \mathscr{F}_{\ut + 1/m}]] = \mathbf{E}[\mathbf{1}_A \mathbf{1}_{\{\tau < \infty \}}] = \mathbf{E}[\mathbf{1}_A \mathbf{E}[\mathbf{1}_{\{ \tau < \infty\}}| \mathscr{F}_{\ut +}]]=\proba[A],
\end{equation*}
which implies that $\mathbf{E}[\mathbf{1}_{\{\tau < \infty\}} | \mathscr{F}_{\ut + 1/m}]=1$ a.e.\ on $A$. Further, using inclusions $A_m \subset \{ \tau > \ut+ 1/m \}$ and $A_m \subset A$, we see that
\begin{equation}\label{exitset}\mathbf{E}[\mathbf{1}_{\{ \ut + 1/m < \tau < \infty \}} | \mathscr{F}_{\ut + 1/m}] = \mathbf{E}[\mathbf{1}_{ \{ \tau < \infty\}} | \mathscr{F}_{\ut + 1/m}] = 1 \quad \textrm{ a.e.\ on $A_m$.}
\end{equation}

Now, define $\tilde{f}(t) \eqdefl f(t) - f(\ut+1/m)$, $t \in [\ut+1/m,T]$, $\tilde{\varepsilon}(\omega) \eqdefl \varepsilon - |X_{\ut + 1/m}(\omega)-X_{\ut}(\omega)-f(\ut+1/m)|$ (see Figure \ref{smallertubepic}), and note that, by (\ref{exitset}) we have $\tilde{\varepsilon}>0$ a.e.\ on $A_m$. Moreover,
\begin{equation}\label{smallertube}
\begin{split}
\{\ut +1/m < \tau < \infty \} &= \bigg\{ \sup_{t\in (\ut+1/m,T]}  |X_t - X_{\ut} - f(t)|\geq \varepsilon \bigg\} \\
&\subset \bigg\{ \sup_{t \in (\ut+1/m,T]} |X_t - X_{\ut + 1/m}-\tilde{f}(t)| \geq \tilde{\varepsilon} \bigg\} \eqdefr F
\end{split}
\end{equation}
by the triangle inequality. For all $n \in \mathbb{Z}_+$, define $B_n \eqdefl A_m \cap \{\tilde{\varepsilon} > 1/n \} \in \mathscr{F}_{\ut + 1/m}$. Since $B_n \uparrow A_m \cap \{\tilde{\varepsilon}>0 \}$ and $\proba[ A_m \cap \{\tilde{\varepsilon}>0 \}] = \proba[A_m]>0$, we have $\proba[B_{m'}]>0$ for some $m' \in \mathbb{Z}_+$. Using \eqref{exitset} and \eqref{smallertube}, we find that
\begin{equation*}
\mathbf{E}\big[I\big(\widehat{X}^{\ut + 1/m}, \tilde{f}, 1/m' \big)\big| \mathscr{F}_{\ut + 1/m} \big] \leq 1-\mathbf{E}[\mathbf{1}_{\{\tilde{\varepsilon}>1/m'\}\cap F}|\mathscr{F}_{\ut + 1/m}] = 0 \quad \textrm{a.e.\ on $B_{m'}$}.
\end{equation*}
Thus, by Lemma \ref{smallballprobs}, $X$ does not have $\mathbb{F}$-CFS, which concludes the proof.
\end{proof}

\begin{figure}
\caption{Choosing $\tilde{f}$ and $\tilde{\varepsilon}$ in the proof of Lemma \ref{usual}.}
\label{smallertubepic}
\vspace*{0.3cm}
\includegraphics[scale=0.7]{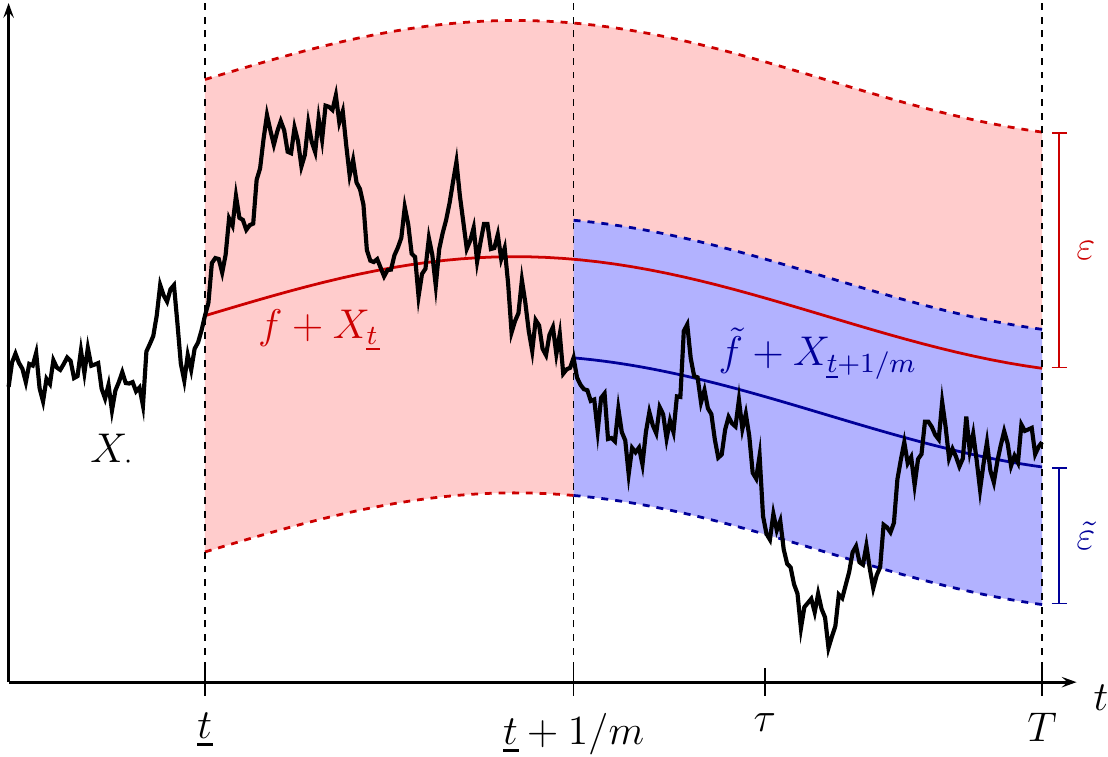}
\end{figure}

We conclude this section by showing that CFS---with respect to the natural filtration of the process---is a property of the law of the process, and thus does not depend on the underlying probability space. This allows flexibility in the subsequent proofs, where it is necessary to assume that the probability space enjoys some specific properties.

\begin{lem}[Law invariance]\label{lawinvariance} Let $(X_t)_{t \in [0,T]}$ and $(Y_t)_{t\in [0,T]}$ be continuous processes (possibly defined on distinct probability spaces) such that $X \stackrel{\mathrm{law}}{=} Y$. Then, $X$ has $\mathbb{F}^X$-CFS if and only if $Y$ has $\mathbb{F}^Y$-CFS.
\end{lem}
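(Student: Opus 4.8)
The plan is to reduce everything to the small-ball characterization of Lemma~\ref{smallballprobs} and then exploit the fact that conditional expectations with respect to the \emph{raw} natural filtration ``factor through the law'' of the process.

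First I would invoke Lemma~\ref{usual} to replace the usual augmentations $\mathbb{F}^X$ and $\mathbb{F}^Y$ by the raw natural filtrations $\tilde{\mathbb{F}}^X$ and $\tilde{\mathbb{F}}^Y$. This step is essential, because only for the raw filtration do we have the clean identity $\tilde{\mathscr{F}}^X_t = \sigma(X_s : s \leq t)$, with no $\proba$-null sets or right-continuous closure interfering with the coordinate-space identification below. By Lemma~\ref{smallballprobs}, $X$ fails to have $\tilde{\mathbb{F}}^X$-CFS precisely when there exist $t \in [0,T)$, $f \in C_0([t,T])$, and $\varepsilon > 0$ with $\proba[A^X_{t,f,\varepsilon}] > 0$, where $A^X_{t,f,\varepsilon} \eqdefl \{ \Xpct[I(\widehat{X}^t,f,\varepsilon)|\tilde{\mathscr{F}}^X_t] = 0 \}$, and analogously for $Y$. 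It therefore suffices to prove that, for each fixed triple $(t,f,\varepsilon)$, one has $\proba[A^X_{t,f,\varepsilon}] = \proba[A^Y_{t,f,\varepsilon}]$; the equivalence then follows at once in both directions.

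To establish this equality I would pass to the canonical path space. Let $\xi : \Omega \to C([0,T])$, $\xi(\omega) \eqdefl (X_t(\omega))_{t \in [0,T]}$, be the path map (measurable by continuity), let $\mathscr{C}_t \eqdefl \sigma(\pi_s : s \leq t)$ be the canonical coordinate filtration, and note that $\tilde{\mathscr{F}}^X_t = \xi^{-1}(\mathscr{C}_t)$. The integrand satisfies $I(\widehat{X}^t,f,\varepsilon) = \Psi(\xi)$, where $\Psi : C([0,T]) \to \{0,1\}$ is the indicator of the open set $\{ g : \sup_{s \in [t,T]} |g(s) - g(t) - f(s)| < \varepsilon \}$; crucially this bounded Borel functional $\Psi$ depends only on $(t,f,\varepsilon)$ and not on the process. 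Writing $\phi \eqdefl \Xpct^{\xi_{\ast}\proba}[\Psi | \mathscr{C}_t]$ for the conditional expectation computed on $C([0,T])$ under the pushforward law $\xi_{\ast}\proba$, the change-of-variables rule for conditional expectations gives $\Xpct[\Psi(\xi)|\tilde{\mathscr{F}}^X_t] = \phi \circ \xi$ a.s., whence $A^X_{t,f,\varepsilon} = \xi^{-1}(\{\phi = 0\})$ and $\proba[A^X_{t,f,\varepsilon}] = (\xi_{\ast}\proba)(\{\phi = 0\})$.

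Running the identical argument for $Y$ with path map $\eta$ yields $\proba[A^Y_{t,f,\varepsilon}] = (\eta_{\ast}\proba)(\{\phi' = 0\})$ with $\phi' \eqdefl \Xpct^{\eta_{\ast}\proba}[\Psi | \mathscr{C}_t]$. Since $X \stackrel{\mathrm{law}}{=} Y$ means $\xi_{\ast}\proba = \eta_{\ast}\proba$ as Borel measures on $C([0,T])$, the two canonical conditional expectations coincide almost everywhere ($\phi = \phi'$ up to a null set) and are integrated against the same measure, so $\proba[A^X_{t,f,\varepsilon}] = \proba[A^Y_{t,f,\varepsilon}]$, as required. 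The only delicate point is the change-of-variables identity $\Xpct[\Psi(\xi)|\xi^{-1}(\mathscr{C}_t)] = (\Xpct^{\xi_{\ast}\proba}[\Psi | \mathscr{C}_t]) \circ \xi$, which I would verify directly from the defining property of conditional expectation by testing both sides against generators $\xi^{-1}(G)$, $G \in \mathscr{C}_t$, and applying the abstract change-of-variables formula for expectations; everything else is bookkeeping, and it is the reduction via Lemma~\ref{usual} that makes the coordinate-space identification $\tilde{\mathscr{F}}^X_t = \xi^{-1}(\mathscr{C}_t)$ exact.
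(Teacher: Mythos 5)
Your proof is correct, and it takes a genuinely different route from the paper's. Both arguments begin the same way: reduce to the raw natural filtrations via Lemma \ref{usual} and recast CFS through the small-ball characterization of Lemma \ref{smallballprobs}. From there the paper proceeds by finite-dimensional approximation: it assumes (without loss of generality) that $X$ and $Y$ live on a common probability space, conditions on the finitely many values $X_{t_1},\ldots,X_{t_n}$ at rational times, invokes a.s.\ martingale convergence (Theorem 7.23 of Kallenberg) to recover the conditional expectation given $\tilde{\mathscr{F}}^X_t$ as a limit, and uses the fact that conditional expectations given finite-dimensional vectors of equal-in-law processes are equal in law (Exercise 6.12 of Kallenberg); equality in law then survives the limit. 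You instead transfer the entire problem to the canonical space $C([0,T])$: the identification $\tilde{\mathscr{F}}^X_t = \xi^{-1}(\mathscr{C}_t)$, the representation $I\big(\widehat{X}^t,f,\varepsilon\big)=\Psi(\xi)$ with a fixed Borel functional $\Psi$, and the factorization $\Xpct[\Psi(\xi)\,|\,\xi^{-1}(\mathscr{C}_t)] = \phi\circ\xi$ with $\phi$ computed under the pushforward law make the small-ball conditional probability a functional of the law alone, so equality of laws finishes the argument in one step. Your route is more self-contained: it needs only the standard change-of-variables property of conditional expectation (correctly verifiable against the sets $\xi^{-1}(G)$, $G\in\mathscr{C}_t$, which already form a $\sigma$-algebra), it requires no approximation or limit argument, and it handles processes on distinct probability spaces directly, with no reduction to a common space. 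The paper's route avoids explicit canonical-space bookkeeping (measurability of the path map, the identity $\mathscr{B}(C([0,T]))=\sigma(\pi_s : s\in[0,T])$) at the cost of importing the two Kallenberg results. One small point worth making explicit in your write-up: the event $\{\phi=0\}$ depends on the chosen version of $\phi$, but only up to a $\xi_{\ast}\proba$-null set, so the quantity $(\xi_{\ast}\proba)(\{\phi=0\})$ is well defined and your comparison of $X$ and $Y$ is unambiguous; with that remark the argument is complete.
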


\begin{proof}
By Lemma \ref{usual}, it suffices to show the equivalence with respect to ``raw'' natural filtrations. Moreover, it is clearly not a restriction to assume that $X$ and $Y$ are defined on the same probability space.
Let $t \in [0,T)$, $f \in C_0([t,T])$, and $\varepsilon > 0$. Denote by $\{t_k : k \in \N \}$ some enumeration of $[0,t]\cap \Q$. By continuity of paths and Theorem 7.23 of \cite{kalle}, we have $\mathbf{E}\big[I\big(\widehat{X}^t,f,\varepsilon\big)\big|X_{t_1},\ldots X_{t_n}\big] \rightarrow \mathbf{E}\big[I\big(\widehat{X}^t,f,\varepsilon\big)\big|\tilde{\mathscr{F}}^X_t\big]$ and $\mathbf{E}\big[I\big(\widehat{Y}^t,f,\varepsilon\big)\big|Y_{t_1},\ldots Y_{t_n}\big] \rightarrow \mathbf{E}\big[I\big(\widehat{Y}^t,f,\varepsilon\big)\big|\tilde{\mathscr{F}}^Y_t\big]$ a.s.\ when $n \rightarrow \infty$. On the other hand, $X \stackrel{\mathrm{law}}{=} Y$ and Exercise 6.12 of \cite{kalle} imply that $\mathbf{E}\big[I\big(\widehat{X}^t,f,\varepsilon\big)\big|X_{t_1},\ldots X_{t_n}\big] \stackrel{\mathrm{law}}{=} \mathbf{E}\big[I\big(\widehat{Y}^t,f,\varepsilon\big)\big|Y_{t_1},\ldots Y_{t_n}\big]$ for all $n \in \N$. Since an equality in law holds also in the limit, the assertion follows from Lemma \ref{smallballprobs}.
\end{proof}

\section{Conditional full support for stochastic integrals}\label{mainresults}

\subsection{Independent integrands and Brownian integrators}

We shall now move to the main results of this paper, first we establish CFS for the process $Z$, as defined by \eqref{Zprocess}, in the case \eqref{caseindep}.

\begin{thm}[Conditional full support]\label{indepcfs}
Let $(H_t)_{t \in [0,T]}$ be continuous process, $(k_t)_{t \in [0,T]}$ a process such that $\int_0^T k^2_s \ud s < \infty$ a.s., and $(W_t)_{t \in [0,T]}$ a Brownian motion independent of $(H,k)$. 
If \begin{equation}\label{nondegeneracy}
\lambda\big(\{ t \in [0,T] : k_t = 0 \} \big) = 0 \quad \textrm{a.s.,}
\end{equation}
then the process
\begin{equation}
Z_t \eqdefl H_t + \int_0^t k_s \ud W_s, \quad t \in [0,T] 
\end{equation}
has CFS.
\end{thm}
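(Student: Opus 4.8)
The plan is to verify the small-ball criterion of Lemma \ref{smallballprobs}: it suffices to show that for every $t \in [0,T)$, every $f \in C_0([t,T])$ and every $\varepsilon>0$ we have $\mathbf{E}\big[I\big(\widehat{Z}^t,f,\varepsilon\big)\big|\mathscr{F}^Z_t\big]>0$ a.s. The key device is to prove this positivity after conditioning on a \emph{larger} $\sigma$-algebra and then descend via Lemma \ref{positivece}. Concretely, I would set $\mathscr{H}_t \eqdefl \sigma\big((H_s,k_s):s\in[0,T]\big)\vee\mathscr{F}^W_t$, the $\sigma$-algebra generated by the entire paths of $H$ and $k$ together with $W$ up to time $t$. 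Since $Z_s = H_s + \int_0^s k_r\,\ud W_r$ is $\mathscr{H}_t$-measurable for each $s\le t$, we have $\mathscr{F}^Z_t\subset\mathscr{H}_t$, so by Lemma \ref{positivece} (with $Y=I(\widehat{Z}^t,f,\varepsilon)\ge 0$) it is enough to establish $\mathbf{E}\big[I\big(\widehat{Z}^t,f,\varepsilon\big)\big|\mathscr{H}_t\big]>0$ a.s.

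Next I would identify the conditional law. Writing $\widehat{Z}^t_s = (H_s-H_t)+\int_t^s k_r\,\ud W_r$ for $s\in[t,T]$, note that conditionally on $\mathscr{H}_t$ the function $(H_s-H_t)_{s\in[t,T]}$ is known and $k$ is known, while---because $W$ is independent of $(H,k)$ and has independent increments---the increment process $(W_s-W_t)_{s\ge t}$ remains a standard Brownian motion independent of $\mathscr{H}_t$. Hence for $\proba$-a.e.\ $\omega$ the quantity $\mathbf{E}\big[I\big(\widehat{Z}^t,f,\varepsilon\big)\big|\mathscr{H}_t\big](\omega)$ equals the small-ball probability $\proba\big[\sup_{s\in[t,T]}\big|(\bar h_s-\bar h_t)+\int_t^s \bar k_r\,\ud\beta_r-f(s)\big|<\varepsilon\big]$ of a genuine Wiener integral, with the paths $(\bar h,\bar k)=(H(\omega),k(\omega))$ frozen and $\beta$ a standard Brownian motion. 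Making this identification rigorous---the measurable dependence of the frozen-path probability on $(\bar h,\bar k)$ and the use of a regular conditional law---is the technical heart of the argument; I expect this to be the main obstacle, and I would handle it by invoking Lemma \ref{lawinvariance} to realize $(H,k)$ and $W$ as coordinates of a product space and then applying Fubini coordinatewise.

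Finally I would run the Gaussian time-change and the classical support theorem. For $\proba$-a.e.\ $\omega$ the frozen integrand satisfies $\int_t^T \bar k_r^2\,\ud r<\infty$ and, by the non-degeneracy hypothesis \eqref{nondegeneracy}, $\lambda(\{r:\bar k_r=0\})=0$; consequently $A_s\eqdefl\int_t^s \bar k_r^2\,\ud r$ is continuous and strictly increasing on $[t,T]$ with $A_t=0$ and $A_T\in(0,\infty)$. Representing the continuous Gaussian martingale $\int_t^\cdot \bar k_r\,\ud\beta_r$ as a time-changed Brownian motion $\beta'_{A_\cdot}$ and substituting $u=A_s$ transforms the event into $\big\{\sup_{u\in[0,A_T]}|\beta'_u-\psi(u)|<\varepsilon\big\}$, where $\psi(u)\eqdefl f(A^{-1}(u))-(\bar h_{A^{-1}(u)}-\bar h_t)$ is continuous with $\psi(0)=0$. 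The full-support property of Wiener measure on $C_0([0,A_T])$ then yields that this probability is strictly positive. Therefore $\mathbf{E}\big[I\big(\widehat{Z}^t,f,\varepsilon\big)\big|\mathscr{H}_t\big]>0$ a.s., and Lemma \ref{positivece} together with Lemma \ref{smallballprobs} completes the proof.
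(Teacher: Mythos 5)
Your proposal is correct and follows essentially the same route as the paper: reduction via Lemma \ref{smallballprobs} and Lemma \ref{positivece} to conditioning on the $\sigma$-algebra generated by the full paths of $(H,k)$ together with $W$ up to time $t$, a product-space realization plus Fubini to ``freeze'' $(H,k)$ (this is exactly the paper's Lemma \ref{freezing}, proved via simple-integrand approximation and Borel--Cantelli, combined with Lemma \ref{lawinvariance}), and finally the Dambis--Dubins--Schwarz time change together with the full support of Wiener measure (the paper's Lemma \ref{wiener}). You correctly identified the freezing step as the technical heart and proposed the same mechanism the paper uses to resolve it.
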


\begin{rem}\label{simplenondeg}
It follows from Fubini's theorem, that if $k_t \neq 0$ a.s.\ for all $t \in [0,T]$, then the condition \eqref{nondegeneracy} holds. Hence, in particular whenever $k_t$ has continuous distribution for all $t$, Theorem \ref{indepcfs} applies.
\end{rem}

\begin{rem}
The process $Z$ does not have to be a semimartingale, as $H$ is only assumed to be continuous. To give a non-trivial example, if
\begin{equation*}
H_t \eqdefl B^{\mathfrak{h}}_t, \quad k_t \eqdefl 1, \quad t \in [0,T],
\end{equation*}
where $B^{\mathfrak{h}}$ is a fractional Brownian motion, independent of $W$, with Hurst index $\mathfrak{h} \in (0,1)$, then $Z$ is a \emph{mixed fractional Brownian motion}, which is not a semimartingale when $\mathfrak{h} \in (0,1/2) \cup (1/2,3/4)$ (as shown by Cheridito \cite{cheri}).   
 \end{rem}

The proof of Theorem \ref{indepcfs} requires some preparation. Specifically, we shall show that the Wiener integral of an almost-everywhere non-vanishing function has positive small-ball probabilities, using a time-change argument similar to the one that appears in \cite{stroo2}.

\begin{lem}[Wiener integrals]\label{wiener}
Let $h \in C([0,T])$, $k \in L^2([0,T])$, $(W_t)_{t \in [0,T]}$ a Brownian motion, and define
\begin{equation*}
J_t \eqdefl h(t) + \int_0^t k(s) \ud W_s, \quad t \in [0,T].
\end{equation*}
If $k(t) \neq 0$ for a.a.\ $t \in [0,T]$,
then for all $\underline{t} \in[0,T)$, $f \in C_0([\underline{t},T])$, and $\varepsilon > 0$ we have
\begin{equation*}
\proba \bigg[ \sup_{t \in [\underline{t},T]} |J_t - J_{\underline{t}} - f(t)| < \varepsilon \bigg] > 0.
\end{equation*}
\end{lem}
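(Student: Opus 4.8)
The plan is to reduce the statement to a known small-ball estimate for Brownian motion via a deterministic time change, exactly as hinted by the reference to \cite{stroo2}. Since $k \in L^2([0,T])$ with $k \neq 0$ a.e., the function
\[
A(t) \eqdefl \int_0^t k(s)^2 \ud s, \quad t \in [0,T],
\]
is continuous, nondecreasing, and—because $\lambda(\{k = 0\}) = 0$—\emph{strictly} increasing, hence a homeomorphism of $[0,T]$ onto $[0, A(T)]$. The Wiener integral $M_t \eqdefl \int_0^t k(s)\,\ud W_s$ is a continuous Gaussian martingale with $\langle M \rangle_t = A(t)$, so by the Dambis--Dubins--Schwarz theorem there is a Brownian motion $(\beta_u)_{u \in [0, A(T)]}$ with $M_t = \beta_{A(t)}$ for all $t$. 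The key point is that because $A$ is strictly increasing and continuous, its inverse $A^{-1}$ is continuous, and hence $t \mapsto \beta_{A(t)}$ recovers the whole path of $M$ without collapsing any time interval.

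First I would rewrite the target event in terms of $\beta$. Setting $g(t) \eqdefl f(t) + h(t) - h(\underline{t})$ for $t \in [\underline{t}, T]$, we have $J_t - J_{\underline{t}} - f(t) = M_t - M_{\underline{t}} - g(t) + f(t) = \big(\beta_{A(t)} - \beta_{A(\underline{t})}\big) - g(t)$, so the probability in question equals
\[
\proba\bigg[ \sup_{t \in [\underline{t},T]} \big|\beta_{A(t)} - \beta_{A(\underline{t})} - g(t)\big| < \varepsilon \bigg].
\]
Substituting $u = A(t)$ and writing $\phi(u) \eqdefl g(A^{-1}(u))$ for $u \in [A(\underline{t}), A(T)]$, this becomes a standard Brownian small-ball (support) statement: $\phi$ is a continuous function on $[A(\underline{t}), A(T)]$ with $\phi(A(\underline{t})) = g(\underline{t}) = 0$, and I want $\proba[\sup_u |\widehat{\beta}^{A(\underline{t})}_u - \phi(u)| < \varepsilon] > 0$. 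This is precisely the classical fact that the support of Brownian motion started at $0$ is all of $C_0$, equivalently that every continuous tube around a $0$-started path has positive Wiener measure; I would invoke it directly (it also follows from the Cameron--Martin theorem after approximating $\phi$ by a smooth function, or simply from the known full support of Brownian motion).

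The one genuine subtlety—and the step I expect to require the most care—is justifying the change of variables at the level of the \emph{path}, i.e.\ that $\sup_{t \in [\underline{t},T]} |M_t - \cdots| = \sup_{u \in [A(\underline{t}),A(T)]} |\beta_u - \cdots|$ with no loss. This is exactly where the hypothesis $\lambda(\{k=0\}) = 0$ is essential: if $A$ were merely nondecreasing, it could be constant on a subinterval, and $\beta$ on the corresponding image interval (a single point) would say nothing about $M$ there, so positivity for $\beta$ would not transfer back to $M$. Strict monotonicity of $A$ makes $A^{-1}$ a well-defined continuous bijection, so the supremum over $t$ and the supremum over $u$ genuinely coincide and $\phi$ is continuous with $\phi(A(\underline{t})) = 0$. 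Once this is in place, the positivity for $\beta$ furnished by the Brownian support theorem transfers verbatim to $J$, completing the proof.
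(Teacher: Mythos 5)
Your proposal is correct and follows essentially the same route as the paper's proof: a Dambis--Dubins--Schwarz time change $A(t)=\int_0^t k(s)^2\,\ud s$, whose strict monotonicity (coming from $k\neq 0$ a.e.) makes $A^{-1}$ a homeomorphism, followed by the full-support property of Wiener measure; the paper merely sets $h=0$ without loss of generality and time-changes on $[\underline{t},T]$ directly, whereas you absorb $h$ into the target function and restrict the time change from $[0,T]$. The only blemish is a harmless sign slip in your definition of $g$ (it should be $g(t)=f(t)-h(t)+h(\underline{t})$ to match the displayed identity), which does not affect the argument since either choice is continuous and vanishes at $\underline{t}$.
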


\begin{proof}
Clearly, we may assume that $h=0$. Let $\underline{t}\in[0,T)$, $f \in C_0([\underline{t},T])$, and $\varepsilon > 0$. Denote
\begin{equation*}
g(s) \eqdefl \int_{\underline{t}}^t \ud \langle J,J \rangle_u = \int_{\underline{t}}^t k(s)^2 \ud s, \quad t \in [\underline{t},T], 
\end{equation*}
and note that since $k(t) \neq 0$ for a.a.\ $t \in [0,T]$, $g$ is a homeomorphism between $[\underline{t},T]$ and $[0,K]$, where $K \eqdefl \int_{\underline{t}}^T k(s)^2 \ud s$. By the Dambis, Dubins--Schwarz theorem, 
there exists a Brownian motion $(B_s)_{s \in [0,K]}$ such that $J_t-J_{\underline{t}} = B_{g(t)}$, $t \in [\underline{t},T]$ a.s. 
Hence, we obtain
\begin{equation*}
\begin{split}
\sup_{t \in [\underline{t},T]} |J_t-J_{\underline{t}} - f(t)| & = \sup_{t \in [\underline{t},T]} |B_{g(t)}-(f\circ g^{-1})(g(t))| \\
& = \sup_{s \in [0,K]} |B_u - (f \circ g^{-1})(u)| \quad \textrm{a.s.}
\end{split}
\end{equation*}
Since $f \circ g^{-1}$ is continuous, and since the Wiener measure is supported on $C_0([0,K])$ (see e.g.\ Corollary VIII.2.3 of \cite{revuz}), we have
\begin{equation*}
\proba \bigg[ \sup_{t \in [\underline{t},T]} |J_t - J_{\underline{t}} - f(t)| < \varepsilon \bigg] = \proba \bigg[ \sup_{s \in [0,K]} |B_s - (f \circ g^{-1})(s)| < \varepsilon \bigg] > 0. \qedhere
\end{equation*}
\end{proof}

We shall now deduce Theorem \ref{indepcfs} from Lemma \ref{wiener} using a suitable conditioning scheme.

\begin{proof}[Proof of Theorem \ref{indepcfs} (beginning)] Let $\ut \in [0,T)$, $f \in C_0([\ut,T])$, and $\varepsilon > 0$. Further, let $(\Omega,\mathscr{F},\proba)$ be the completed probabilility space that carries $W$, $H$, and $k$. By Lemma \ref{smallballprobs}, it suffices to show that
\begin{equation}\label{smallball}
\mathbf{E} \big[ I\big(\widehat{Z}^{\ut},f,\varepsilon \big) \big| \tilde{\mathscr{F}}^{Z}_{\ut} \big] > 0 \quad \textrm{$\proba$-a.s.,} 
\end{equation}
The proof of this assertion becomes more transparent when we work on an extension of the space $(\Omega,\mathscr{F},\proba)$. Namely, we show an analogous property for a variant of $Z$, denoted by $Z^{\star}$, in which the integrator is $W$ up to time $\ut$, but further Brownian increments of the integrator are defined on an auxiliary space. Then, since $Z$ and $Z^{\star}$ have the same distribution (e.g.\ by Exercise IV.5.16 of \cite{revuz}), it follows that \eqref{smallball} holds, by the argument used in the proof of Lemma \ref{lawinvariance}.

We define the extended space by
\begin{equation*}
\Omega^{\star} \eqdefl \Omega \times C_0([0,T]), \quad \mathscr{F}^{\star} \eqdefl \overline{\mathscr{F}\otimes \mathscr{B}(C_0([0,T]))}, \quad \proba^{\star} \eqdefl \overline{\proba \otimes \nu},
\end{equation*}
where $\nu$ is the Wiener measure on $C_0([0,T])$ and the bars denote completion. For any $\omega^{\star} = (\omega,\omega') \in \Omega^{\star}$, we define $B_t(\omega^{\star})\eqdefl B_t(\omega') \eqdefl \omega'(t)$ and $W^{\star}_t(\omega^{\star}) \eqdefl B_{t \vee \ut}(\omega')-B_{\ut}(\omega')+W_{t\wedge \ut}(\omega)$ for all $t \in [0,T]$. Moreover, we denote by $\mathbf{E}^{\star}$ the expectation with respect to $\proba^{\star}$, by $X$ the identity map on $\Omega$, which can be seen as a random element in the measurable space $(\Omega,\mathscr{F})$, and by $Z^{\star}$ the process analogous to $Z$, with $W^{\star}$ as the integrator. 
Note that by joint measurability, we have $H_t(\omega) = \phi(t,\omega)$ and $k_t(\omega) = \psi(t,\omega)$ for some $\mathscr{B}([0,T])\otimes\mathscr{F}$-measurable functions $\phi$ and $\psi$ from $[0,T]\times \Omega$ to $\R$.
\renewcommand{\qedsymbol}{} 
\end{proof}

For the conclusion of the proof we need the following auxiliary result, which asserts that ``freezing'' randomness on the original probability space $\Omega$ reduces $Z^{\star}$ to a Wiener integral with a drift.

\begin{lem}[Freezing]\label{freezing}
For $\proba$-a.a.\ $\omega \in  \Omega$, we have
\begin{equation}\label{wienerident}
\big(\widehat{Z}^{\star,\ut}_t(\omega,\cdot)\big)_{t\in [\ut,T]} = \bigg(\phi(t,\omega) - \phi(\ut,\omega)+ \int_{\ut}^t \psi(s,\omega) \ud B_s\bigg)_{t \in [\ut,T]}
\end{equation}
up to $\nu$-indistinguishability, where the integral on the right hand side is a Wiener integral.
\end{lem}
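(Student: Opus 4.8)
The plan is to prove the identity \eqref{wienerident} by unwinding the definitions of $Z^{\star}$ and $W^{\star}$ on the extended space, and verifying that when the $\omega$-coordinate is frozen, the stochastic integral driving $Z^{\star}$ collapses to an honest Wiener integral in the $\omega'$-coordinate. First I would write out $\widehat{Z}^{\star,\ut}_t = Z^{\star}_t - Z^{\star}_{\ut}$ explicitly for $t \in [\ut,T]$. Since $Z^{\star}_t = H_t + \int_0^t k_s\,\ud W^{\star}_s$, the continuous drift contributes $H_t - H_{\ut} = \phi(t,\omega) - \phi(\ut,\omega)$ after freezing, which matches the first term on the right of \eqref{wienerident} directly. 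The substance is therefore in the integral term: I must show that, for $\proba$-almost every $\omega$,
\begin{equation*}
\int_0^t k_s\,\ud W^{\star}_s - \int_0^{\ut} k_s\,\ud W^{\star}_s = \int_{\ut}^t \psi(s,\omega)\,\ud B_s \quad \text{for } t \in [\ut,T],
\end{equation*}
up to $\nu$-indistinguishability.

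The key step is the behaviour of $W^{\star}$ past time $\ut$. By its definition $W^{\star}_t(\omega,\omega') = B_{t\vee\ut}(\omega') - B_{\ut}(\omega') + W_{t\wedge\ut}(\omega)$, so for $t \geq \ut$ we have $W^{\star}_t - W^{\star}_{\ut} = B_t(\omega') - B_{\ut}(\omega')$, which depends only on $\omega'$ and equals the increment of the auxiliary Brownian motion $B$. Thus on $[\ut,T]$ the integrator $W^{\star}$ is, up to an $\omega$-measurable additive constant, simply $B$. I would then appeal to the fact that the integrand $k_s = \psi(s,\omega)$ becomes, after freezing $\omega$, a deterministic function of $s$ lying in $L^2([\ut,T])$ for $\proba$-almost every $\omega$ (this integrability for a.a.\ $\omega$ follows from $\int_0^T k_s^2\,\ud s < \infty$ a.s.\ together with Fubini applied to $\psi$), so the right-hand integral is a well-defined Wiener integral against $B$.

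The main obstacle I anticipate is the passage between the \emph{stochastic} integral on the extended space $\Omega^{\star}$ and the \emph{frozen} Wiener integral on $C_0([0,T])$ — that is, justifying that one may interchange the stochastic integration with the conditioning/freezing of the $\omega$-coordinate. The integral $\int_0^t k_s\,\ud W^{\star}_s$ is an It\^o integral on the product space with respect to a filtration to which $W^{\star}$ is adapted, whereas the claimed right-hand side is an integral computed $\omega$-by-$\omega$. To bridge these I would use a standard approximation/localization argument: first verify the identity for simple (piecewise constant in $t$) integrands $\psi$, where both sides reduce to finite sums of increments $\psi(s_i,\omega)(B_{s_{i+1}} - B_{s_i})$ and the claim is immediate from the definition of $W^{\star}$; then extend to general $\psi \in L^2$ by an isometry/approximation argument, noting that an $L^2(\proba^{\star})$-approximation of the integrand yields convergence of the stochastic integrals, while for $\proba$-a.a.\ fixed $\omega$ the same approximating sequence converges in $L^2(\nu)$ to the frozen Wiener integral. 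Passing to a subsequence converging $\proba^{\star}$-a.s.\ and invoking Fubini to recover a.s.\ convergence in the $\omega'$-coordinate for $\proba$-a.a.\ $\omega$, together with path-continuity of both sides to upgrade from a fixed-$t$ identity to $\nu$-indistinguishability on $[\ut,T]$, completes the argument. The independence of $(H,k)$ from $W$ — hence from the future increments encoded by $B$ — is what makes the frozen integrand deterministic and the Wiener integral meaningful, so I would keep track of where that independence enters.
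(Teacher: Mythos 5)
Your proposal follows essentially the same route as the paper's proof: approximate the integrand by simple (piecewise-constant) processes, for which the frozen identity is immediate from the definition of $W^{\star}$, then pass to the limit on both sides---It\^o-integral convergence on the product space along an a.s.-convergent subsequence combined with Fubini, and Wiener-integral convergence for the frozen integrand---finishing with path continuity to upgrade to $\nu$-indistinguishability. The only wrinkle is that your $L^2(\proba^{\star})$-isometry step would require $\mathbf{E}^{\star}\big[\int_0^T k_s^2 \,\ud s\big]<\infty$, which is not assumed; the paper avoids this by taking the approximation pathwise in $L^2[\ut,T]$ (Kallenberg, Lemma 17.23) and using convergence in probability plus Borel--Cantelli, which your ``localization'' remark essentially covers.
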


\begin{proof} Let us denote by $J^{\omega}$ the process on the right hand side of \eqref{wienerident}. By a standard argument (see e.g.\ Lemma 17.23 of \cite{kalle}), there exist measurable functions $\psi_n$, $n \in \N$ from $[\ut,T] \times \Omega$ to $\R$ such that $\psi_n(t,\omega) = \sum_{i=1}^{k_n} \xi^n_i (\omega)\mathbf{1}_{(t^n_{i},t^n_{i+1}]}(t)$ and $\| \psi_n(\cdot,\omega)- \psi(\cdot,\omega)\|_{L^2[\ut,T]} \rightarrow 0$ when $n \rightarrow \infty$ for $\proba$-a.a.\ $\omega \in \Omega$. Now denote
\begin{equation*}
\begin{split}
I^n_t(\omega,\omega') & \eqdefl \phi(t,\omega) + \phi(\ut,\omega) + \int_{\ut}^t \psi_n(s,\omega) \ud W^{\star}_s(\omega,\omega') \\
& \eqdefl \phi(t,\omega) + \phi(0,\omega) + \sum_{i=1}^{k_n} \xi^n_i(\omega) (B_{t^n_{i+1}\wedge t}(\omega')- B_{t^n_i \wedge t}(\omega')).
\end{split}
\end{equation*}
By $L^2$-continuity of It\^o integrals and Borel--Cantelli lemma, there exists a sequence $n_m \uparrow \infty$ such that $\sup_{t \in [\ut,T]} |\widehat{Z}^{\star,\ut}_t-I^{n_m}_t| \rightarrow 0$ $\proba^{\star}$-a.s.\ when $m \rightarrow \infty$. Hence, by Fubini's theorem
\begin{equation*}
\sup_{t \in [\ut,T]} |\widehat{Z}^{\star,\ut}_t(\omega,\cdot)-I^{n_m}_t(\omega,\cdot)|\rightarrow 0 \quad \textrm{$\nu$-a.s.}
\end{equation*}for $\proba$-a.a.\ $\omega \in \Omega$. On the other hand, $L^2$-continuity of Wiener integrals implies that for $\proba$-a.a.\ $\omega \in \Omega$, also
\begin{equation*}
\sup_{t \in [\ut,T]} |J_t^{\omega}-I^{n_m}_t(\omega,\cdot)|\rightarrow 0 \quad \textrm{in $\nu$-probability.}
\end{equation*}
Now, since we have for $\proba$-a.a.\ $\omega \in \Omega$,
\begin{equation*}
\begin{split}
\mathbf{E}^{\nu}\bigg[\sup_{t \in [\ut,T]}|J^{\omega}_t-\widehat{Z}^{\star,\ut}_t(\omega,\cdot)| \wedge 1\bigg] & \leq \mathbf{E}^{\nu}\bigg[\sup_{t \in [\ut,T]}|\widehat{Z}^{\star,\ut}_t(\omega,\cdot)-I^{n_m}_t(\omega,\cdot)| \wedge 1\bigg]\\
& \quad + \mathbf{E}^{\nu}\bigg[\sup_{t \in [\ut,T]}|J^{\omega}_t-I^{n_m}_t(\omega,\cdot)| \wedge 1\bigg],
\end{split}
\end{equation*}
letting $m \rightarrow \infty$ completes the proof.\end{proof}

\begin{proof}[Proof of Theorem \ref{indepcfs} (conclusion)] Let us denote
$\mathscr{G} \eqdefl \mathscr{F} \otimes \{\varnothing, C_0([0,T]) \}$.
We shall show that
$\mathbf{E}^{\star} \big[I\big(\widehat{Z}^{\star,\ut},f,\varepsilon \big)  \big| \mathscr{G}\big] > 0$ $\proba^{\star}$-a.s., which by Lemma \ref{positivece} implies that the same holds also with respect to $\tilde{\mathscr{F}}^{Z^{\star}}_{\ut} \subset \mathscr{G}$, which in turn implies that \eqref{smallball} holds. We may compose
\begin{equation*}
\widehat{Z}^{\star,\ut}(\omega,\omega') = \widehat{Z}^{\star,\ut}(X(\omega),B(\omega')), \quad (\omega,\omega') \in \Omega^{\star}.
\end{equation*}
Moreover, by independence, $\nu$ is a version of the regular $\mathscr{G}$-conditional law of $B$ on $C_0([0,T])$.
By the disintegration theorem (Theorem 6.4 of \cite{kalle}), we have $\proba^{\star}$-a.s.\
\begin{equation*}
\begin{split}
\mathbf{E}^{\star} \big[ I\big(\widehat{Z}^{\star,\ut},f,\varepsilon \big) \big| \mathscr{G}\big] & = \mathbf{E}^{\star}\big[\mathbf{1}_{B(f,\varepsilon)}\big(\widehat{Z}^{\star,\ut}(X,B)\big)\big|\mathscr{G}\big] \\
& = \int_{C_0([0,T])}\mathbf{1}_{B(f,\varepsilon)}\big(\widehat{Z}^{\star,\ut}(X,\omega')\big) \nu(\ud \omega') \eqdefr Y(X).
\end{split}
\end{equation*}
By Lemma \ref{freezing}, for $\proba$-a.a.\ $\omega \in \Omega$, $\mathbf{1}_{B(f,\varepsilon)}\big(\widehat{Z}^{\star,\ut}(\omega,\cdot)\big) = \mathbf{1}_{B(f,\varepsilon)}\big(J^{\omega}\big)$ $\nu$-a.s., where $J^{\omega}$ is the right hand side of \eqref{wienerident}. But for $\proba$-a.a.\ $\omega \in \Omega$ the map $\psi(\cdot,\omega)$ is a.e.\ non-vanishing, so it follows from Lemma \ref{wiener} that for $\proba$-a.a.\ $\omega\in \Omega$,
\begin{equation*}
Y(X(\omega))=\int_{C_0([0,T])}\mathbf{1}_{B(f,\varepsilon)}(J^{\omega}(\omega')) \nu(\ud \omega')>0.
\end{equation*}
Hence, $Y>0$ also $\proba^{\star}$-a.s., which concludes the proof.
\end{proof}

\subsection{Progressively measurable integrands and Brownian integrators}

In the first case \eqref{caseindep}, $(H,k)$ and $W$ were assumed to be independent. We shall now move to the case \eqref{casedep} and dispense with this assumption. Before stating the result, let us consider two examples that motivate why the conditions we now impose on $H$ and $k$ need to be---apart from allowing dependence---more stringent than earlier.  

\begin{exmp}\label{nocfs1}
When the integrand $k$ is allowed to depend on the Brownian motion $W$, the condition (\ref{nondegeneracy}) is no longer sufficient. Namely, if we set e.g.\ 
\begin{equation*}
H_t \eqdefl 1, \quad k_t \eqdefl e^{W_t - \frac{t}{2}}, \quad t \in [0,T],
\end{equation*}
then $Z = k =\mathscr{E}(W)$, the Dol\'eans exponential of $W$, which is strictly positive and thus does not have CFS, when understood as a process in $\R$.  
\end{exmp}

\begin{exmp}\label{nocfs2}
Even when $k$ is positive and constant, but $H$ depends on $W$, $Z$ might not have CFS. To demonstrate this, let $(B_t)_{t \in [0,T]}$ be a Brownian motion and $\mathbb{G}=(\mathscr{G}_t)_{t \in [0,T]}$ a filtration defined by
\begin{equation*}
\mathscr{G}_t \eqdefl \bigcap_{s > t} (\mathscr{F}^B_s \vee \sigma\{B_T \}), \quad t \in [0,T]. 
\end{equation*}
It is well known (see e.g.\ \cite{jeuli}) that there exists a $\mathbb{G}$-Brownian motion $W$, such that if we define $Z$ with respect to this specific $W$ and set
\begin{equation*}
H_t \eqdefl \int_0^t \frac{B_T - B_s}{T-s}\ud s, \quad k_t \eqdefl 1, \quad t \in [0,T],
\end{equation*}
then $Z=B$, which clearly does not have $\mathbb{G}$-CFS.
\end{exmp}

Intuitively, when $H$ and $k$ depend on $W$, even though $k$ is always positive, they may use ``data'' from $W$ to ''steer'' the process $Z$ away from some regions of the path space, so that CFS does not hold.

\begin{thm}[Conditional full support]\label{progcfs} Let $(X_t)_{t \in [0,T]}$ be a continuous process, $(W_t)_{t \in [0,T]}$ a Brownian motion, $\phi$ and $\psi$ progressive functions from $[0,T] \times C([0,T])^2$ to $\R$, and $\xi$ a random variable. Define
\begin{equation*}
h_t \eqdefl \phi(t,W,X), \ \ k_t \eqdefl \psi(t,W,X), \ \  \mathscr{F}_t \eqdefl \sigma \{\xi, W_s, X_s : s \in [0,t] \}, \ \  t \in [0,T].
\end{equation*}
If $W$ is a Brownian motion also with respect to $\mathbb{F}=(\mathscr{F}_t)_{t \in [0,T]}$,
\begin{equation}\label{hkbounds}
\mathbf{E}\Big[ e^{\lambda \int_0^T k^{-2}_s \ud s}\Big] < \infty \quad \textrm{for all $\lambda>0$},\quad \mathbf{E}\Big[ e^{2 \int_0^T k^{-2}_s h^2_s \ud s}\Big] < \infty, \quad \textrm{and}
\end{equation}
\begin{equation}\label{Kbound}
\int_0^T k^2_s \ud s \leq \overline{K} \quad \textrm{a.s.\ for some constant $\overline{K} \in (0,\infty)$},
\end{equation}
then the process
\begin{equation*}
Z_t \eqdefl \xi + \int_0^t h_s \ud s + \int_0^t k_s \ud W_s, \quad t \in [0,T],
\end{equation*}
has CFS.
\end{thm}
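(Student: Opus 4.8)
The plan is to remove the drift of $Z$ on the interval $[\ut,T]$ by an equivalent change of measure that leaves $\mathscr{F}_{\ut}$ untouched, and then to reduce the resulting driftless problem to a small-ball estimate for a time-changed Brownian motion, in the spirit of Lemma \ref{wiener} but now conditionally. By Lemma \ref{smallballprobs} it suffices to fix $\ut \in [0,T)$, $f \in C_0([\ut,T])$ and $\varepsilon > 0$, and to show $\mathbf{E}[I(\widehat{Z}^{\ut},f,\varepsilon)\,|\,\mathscr{F}_{\ut}] > 0$ a.s. First I would replace $f$ by a Lipschitz approximant $\bar f \in C_0([\ut,T])$ with $\|f-\bar f\|_{\infty} < \varepsilon/2$ and $\|\bar f'\|_{\infty} \le M < \infty$ (e.g.\ a piecewise-linear interpolation), so that it is enough to keep $\widehat Z^{\ut}$ within $\varepsilon/2$ of $\bar f$.

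Next I would define the steering drift $\theta_s \eqdefl (\bar f'(s) - h_s)/k_s$ for $s \in (\ut,T]$ and $\theta_s \eqdefl 0$ for $s \le \ut$; this is well defined a.e., since \eqref{hkbounds} forces $\int_0^T k_s^{-2}\ud s < \infty$ a.s.\ and hence $k_s \neq 0$ for a.e.\ $s$. Put $D \eqdefl \mathscr{E}(\int_0^{\cdot} \theta_s \ud W_s)$. The key technical step, and the one I expect to be the main obstacle, is to verify Novikov's condition $\mathbf{E}[\exp(\tfrac12\int_{\ut}^T \theta_s^2 \ud s)] < \infty$: from $(\bar f'(s)-h_s)^2 \le 2M^2 + 2h_s^2$ one gets $\tfrac12\int_{\ut}^T \theta_s^2 \ud s \le M^2\int_0^T k_s^{-2}\ud s + \int_0^T k_s^{-2} h_s^2 \ud s$, and Cauchy--Schwarz then splits the exponential moment into $\mathbf{E}[\exp(2M^2\int_0^T k_s^{-2}\ud s)]^{1/2}\,\mathbf{E}[\exp(2\int_0^T k_s^{-2} h_s^2 \ud s)]^{1/2}$, both finite by the two hypotheses in \eqref{hkbounds} (the first with $\lambda = 2M^2$). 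This is precisely where the assumptions bite and where Examples \ref{nocfs1}--\ref{nocfs2} are excluded: there no such integrable density exists, so the drift cannot be removed. Granting Novikov, $D$ is a positive $\mathbb{F}$-martingale with $D_{\ut} = 1$ (as $\theta \equiv 0$ on $[0,\ut]$), so $\ud\qrob \eqdefl D_T \ud\proba$ defines a probability measure equivalent to $\proba$ that agrees with $\proba$ on $\mathscr{F}_{\ut}$, and by Girsanov $\tilde W_s \eqdefl W_s - \int_0^s \theta_u \ud u$ is an $\mathbb{F}$-Brownian motion under $\qrob$. Under $\qrob$ the drift is cancelled, giving $Z_u - Z_{\ut} = \bar f(u) + M_u$ for $u \in [\ut,T]$, where $M_u \eqdefl \int_{\ut}^u k_s \ud\tilde W_s$.

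It then remains to prove the conditional small-ball estimate $\qrob[\sup_{u \in [\ut,T]}|M_u| < \varepsilon/2 \,|\, \mathscr{F}_{\ut}] > 0$ a.s. Here I would apply the Dambis--Dubins--Schwarz theorem: on a possibly enlarged space $M_u = \beta_{\langle M\rangle_u}$ for a Brownian motion $\beta$ which is Brownian with respect to a filtration whose initial $\sigma$-algebra contains $\mathscr{F}_{\ut}$ and satisfies $\beta_0 = M_{\ut} = 0$; consequently the entire path of $\beta$ is independent of $\mathscr{F}_{\ut}$. Because \eqref{Kbound} gives $\langle M\rangle_T = \int_{\ut}^T k_s^2 \ud s \le \overline K$ uniformly, one has the inclusion $\{\sup_{v \in [0,\overline K]}|\beta_v| < \varepsilon/2\} \subset \{\sup_{u}|M_u| < \varepsilon/2\}$, so by independence and positivity of Brownian small-ball probabilities the conditional probability is bounded below a.s.\ by the strictly positive deterministic constant $\proba[\sup_{v\in[0,\overline K]}|\beta_v| < \varepsilon/2]$; this is exactly where the uniform bound \eqref{Kbound} is essential, as it makes the lower bound free of $\omega$. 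Finally, combining $\{\sup_u|M_u| < \varepsilon/2\} \subset \{\widehat Z^{\ut} \in B(f,\varepsilon)\}$ with $D_{\ut}=1$ and $D_T > 0$ a.s., the Bayes rule $\qrob[\,\cdot\,|\,\mathscr{F}_{\ut}] = \mathbf{E}[D_T\,\mathbf{1}_{(\cdot)}\,|\,\mathscr{F}_{\ut}]$ transfers positivity back to $\proba$ (an argument in the spirit of Lemma \ref{positivece}), yielding $\mathbf{E}[I(\widehat Z^{\ut},f,\varepsilon)\,|\,\mathscr{F}_{\ut}] > 0$ a.s.\ and completing the proof.
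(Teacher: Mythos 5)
Your proposal is correct, and it reaches the conclusion by a genuinely different route than the paper, even though the two share the same analytic core. The Novikov verification (the Cauchy--Schwarz split of $\exp(M^2\int_0^T k_s^{-2}\ud s+\int_0^T k_s^{-2}h_s^2\ud s)$ using \eqref{hkbounds} with $\lambda=2M^2$) and the small-ball step (Dambis--Dubins--Schwarz plus the uniform bound \eqref{Kbound} to dominate the time change by $\overline{K}$) are essentially identical to the paper's ``Step 2.'' Where you genuinely diverge is in how the conditioning on $\mathscr{F}_{\ut}$ is handled. The paper conditions \emph{first}: it moves to the canonical path space, fixes a regular conditional law $\mu=\proba[\,\cdot\,|\mathscr{F}_{\ut}]$, and spends the long ``Step 1'' (countable $\pi$-systems, the disintegration theorem, the functional monotone class theorem) verifying that under $\mu(\omega^{\star},\cdot)$ the shifted process $\widehat{W}^{\ut}$ is still a Brownian motion for the relevant filtration and that the exponential-moment bounds hold conditionally; Girsanov and DDS are then applied under each fixed conditional measure, so the resulting small-ball statement is unconditional there. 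You instead change measure \emph{globally} under $\proba$, exploiting that your steering drift $\theta$ vanishes on $[0,\ut]$, so that $D_{\ut}=1$, $\qrob=\proba$ on $\mathscr{F}_{\ut}$, and Bayes' rule reads $\qrob[\,\cdot\,|\mathscr{F}_{\ut}]=\mathbf{E}[D_T\mathbf{1}_{(\cdot)}|\mathscr{F}_{\ut}]$; the conditional small-ball bound then falls out of the observation that the DDS Brownian motion of $M=\int_{\ut}^{\cdot}k_s\ud\tilde{W}_s$ is Brownian for a time-changed filtration whose initial $\sigma$-algebra $\mathscr{F}_{\tau_0}$ contains $\mathscr{F}_{\ut}$, and a Brownian motion started at a deterministic point is independent of the initial $\sigma$-algebra of any filtration it is Brownian for --- hence the conditional probability is bounded below by the deterministic constant $\qrob[\sup_{v\in[0,\overline{K}]}|\beta_v|<\varepsilon/2]>0$. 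This buys a substantially shorter proof: the entire canonical-space/regular-conditional-law apparatus of the paper's Step 1 is replaced by two standard facts (the filtration structure in DDS, justified on an enlargement since $\langle M\rangle_\infty<\infty$, and the independence of a Brownian path from the time-zero $\sigma$-algebra) plus the Bayes manipulation; what the paper's ordering buys is that after conditioning everything is unconditional, so no independence claim or change-of-measure bookkeeping relative to $\mathscr{F}_{\ut}$ is ever needed. Two cosmetic remarks: your final display establishes $\mathbb{F}$-CFS, and one should add the paper's closing line invoking Corollary \ref{smallerfilt} and Lemma \ref{usual} to pass to the natural filtration of $Z$; and when dividing by $k_s$ you should set $\theta_s\eqdefl 0$ on the (Lebesgue-null, a.s.) set $\{k_s=0\}$, noting that this does not affect the identity $\int_{\ut}^u k_s\theta_s\ud s=\bar{f}(u)-\int_{\ut}^u h_s\ud s$.
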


\begin{rem}
In the light of Examples \ref{nocfs1} and \ref{nocfs2}, the condition \eqref{hkbounds} seems nearly optimal. However, it is not that clear how necessary condition \eqref{Kbound} is---i.e., if some non-uniform bound would suffice. That said, for the method we use in the present proof, this uniform bound appears to be unavoidable.
\end{rem}

\begin{proof}
For technical reasons, we assume, without loss of generality, that $W$, $X$, and $\mathbb{F}$ are defined on the whole interval $[0,\infty)$. In order to be able to use regular conditional laws, we define the underlying probability space as follows
\begin{equation*}
\Omega \eqdefl C([0,\infty),\R^2)\times\R, \quad \mathscr{F} \eqdefl \mathscr{B}(C([0,\infty),\R^2)\times \R), \quad \proba \eqdefl \mathrm{Law}[(W,X),\xi],
\end{equation*}
that is, for any $\omega = \big((\omega'_1,\omega'_2),\omega''\big) \in \Omega$ we set $W_t(\omega) \eqdefl \omega'_1(t)$, $X_t(\omega)\eqdefl \omega'_2(t)$ for all $t \in [0,\infty)$, and $\xi(\omega) \eqdefl \omega''$.
Further, let us fix $\underline{t} \in [0,T)$, $f \in C_0 ([\underline{t},\infty))$ such that $f$ is absolutely continuous with bounded Radon--Nikod\'ym derivative $f'$ that satisfies $\mathrm{supp}(f')\subset[\underline{t},T]$, and $\varepsilon>0$. Since $\Omega$ is a Polish space, and by well-known results (see e.g.\ Theorem II.89.1 of \cite{roger}) there exists a regular conditional law $\mu \eqdefl \proba[ \ \cdot \ | \mathscr{F}_{\ut}]$ (a random measure on $(\Omega,\mathscr{F})$).

\emph{Step 1: Conditioning.}
We are about to fix a realization of the random measure $\mu$, drawn from a set having probability one, under which $\widehat{W}^{\ut}$ is a Brownian motion with respect to 
$(\mathscr{F}_t)_{t \in [\ut,\infty)}$.
To this end, for any $d \in \mathbb{Z}_+$, $\mathbf{t} = (t_1,\ldots,t_d) \in [0,\infty)^d$, and $\mathbf{q}=(q_1,\ldots,q_d) \in \R^d$, write
\begin{equation*}
\begin{split}
A_1(\mathbf{t},\mathbf{q}) &\eqdefl \left\{W_{t_1} \leq q_1,\ldots, W_{t_d} \leq q_d \right\}, \\
A_2(\mathbf{t},\mathbf{q}) &\eqdefl \left\{\widehat{W}^{\ut}_{t_1} \leq q_1,\ldots, \widehat{W}^{\ut}_{t_d} \leq q_d \right\}, \quad \textrm{(where $\mathbf{t} \in [\ut,\infty)^d$),} \\
B(\mathbf{t},\mathbf{q}) &\eqdefl \left\{X_{t_1} \leq q_1,\ldots, X_{t_{d-1}} \leq q_{d-1}, \xi \leq q_d \right\}
\end{split}
\end{equation*}
and for any $t \in [\ut,\infty)$,
\begin{equation*}
\begin{split}
\mathscr{I} & \eqdefl \{A_1(\mathbf{t},\mathbf{q}) : d \in \mathbb{Z}_+, \mathbf{t}  \in [0,\ut]^d \cap (\Q \cup \{ \ut \})^d, \mathbf{q} \in \Q^d \}, \\
\mathscr{J}_t & \eqdefl \{A_2(\mathbf{t},\mathbf{q}) : d \in \mathbb{Z}_+, \mathbf{t}  \in [\ut,t]^d \cap (\Q \cup \{ \ut \})^d, \mathbf{q} \in \Q^d \}, \\
\mathscr{K}_t & \eqdefl \{B(\mathbf{t},\mathbf{q}) : d \in \mathbb{Z}_+, \mathbf{t}  \in [0,t]^d \cap (\Q \cup \{ \ut \})^d, \mathbf{q} \in \Q^d \}.
\end{split}
\end{equation*}
Further, we note that
\begin{equation*}
\mathscr{C}_t  \eqdefl \{A_1 \cap A_2 \cap B : A_1 \in \mathscr{I}, A_2 \in \mathscr{J}_t, B \in \mathscr{K}_t \},\quad 
\mathscr{J}_{\infty} \eqdefl \bigcup_{t \in [\ut,\infty)} \mathscr{J}_t
\end{equation*}
are a countable $\pi$-systems, and that by continuity of the associated processes, $\sigma(\mathscr{C}_t) = \mathscr{F}_t$, $t \in [\ut,\infty)$ and $\tilde{\mathscr{F}}^{\widehat{W}^{\ut}}_{\infty}= \sigma(\mathscr{J}_{\infty})$, respectively.
Next, define sets
\begin{equation*}
E \eqdefl \bigcap_{A \in \mathscr{J}_{\infty}} \left\{ \omega \in \Omega : \mu(\omega,A) = \proba[A] \right\},
\end{equation*}
\begin{equation*}
F \eqdefl \bigcap_{\substack{t \in [\ut,\infty) \cap \Q\\s \in \Q_+}}\bigcap_{C \in \mathscr{C}_t}\left\{ \omega \in \Omega : \int_{C} \mu(\omega, \ud \omega') (W_{s+t}(\omega')-W_t(\omega'))=0\right\},
\end{equation*}
\begin{equation*}
G_1 \eqdefl \left\{ \omega \in \Omega : Y_1(\omega) \eqdefl \int_{\Omega} \mu(\omega, \ud \omega') e^{2\| f'\|^2_{\infty} \int_{\underline{t}}^T k^{-2}_s(\omega') \ud s} < \infty \right\},
\end{equation*}
\begin{equation*}
G_2 \eqdefl \left\{ \omega \in \Omega : Y_2(\omega) \eqdefl \int_{\Omega} \mu(\omega, \ud \omega') e^{2 \int_{\underline{t}}^T k^{-2}_s(\omega') h_s^2(\omega') \ud s} < \infty \right\},
\end{equation*}
\begin{equation*}
H \eqdefl \left\{ \omega \in \Omega : \mu \Big( \omega ,\Big\{{\textstyle \int_0^T} k^2_s \ud s \leq \overline{K}\Big\}\Big)=1\right\}.
\end{equation*}
It follows from the assumption that $W$ is a Brownian motion with respect to $\mathbb{F}$, conditions \eqref{hkbounds} and \eqref{Kbound}, and from the disintegration theorem (Theorem 6.4 of \cite{kalle}) that $\proba[E] = \proba[G_1] = \proba[G_2] = \proba[H]=1$. To check that $\proba [F]=1$, note that the intersections in the definition of $F$ are countable, and that
\begin{equation*}
\mathbf{E}[\mathbf{1}_{C}(W_{s+t} - W_t)| \mathscr{F}_{\ut}] = \mathbf{E}[\mathbf{1}_{C} \mathbf{E}[W_{s+t} - W_t | \mathscr{F}_t]| \mathscr{F}_{\ut}] = 0 \quad \textrm{$\proba$-a.s.,}
\end{equation*}
and use the disintegration theorem again.

For the remainder of the proof, we fix $\omega^{\star} \in E \cap F \cap G_1 \cap G_2 \cap H$, denote by $(\mathscr{F}^{\star},\proba^{\star})$ the completion of $(\mathscr{F},\mu(\omega^{\star}, \, \cdot \, ))$, and let $\mathbb{F}^{\star}=(\mathscr{F}^{\star}_t)_{t \in [\underline{t},\infty)}$ be the usual $\proba^{\star}$-augmentation of $(\mathscr{F}_t)_{t \in [\underline{t},\infty)}$. Again, denote by $\mathbf{E}^{\star}$ the expectation with respect to $\proba^{\star}$. Since $\mathscr{J}_{\infty}$ is a $\pi$-system that generates $\tilde{\mathscr{F}}^{\widehat{W}^{\ut}}_{\infty}$, it follows that $W^{\star}\eqdefl\widehat{W}^{\ut}$ is a Brownian motion under $\proba^{\star}$. To complete the first step, we still need to show that $W^{\star}$ is an ($\mathbb{F}^{\star}$, $\proba^{\star}$)-martingale. We note that for any $t \in [\ut,\infty) \cap \Q \cup \{\ut\}$ and $s \in \Q_+$,
\begin{equation*}
\mathbb{H}_{s,t} \eqdefl \left\{ Y \in L^{\infty}(\Omega, \proba^{\star}) : \mathbf{E}^{\star}[Y(W_{s+t} - W_t)] = 0 \right\}
\end{equation*}
is a vector space, closed with respect to uniform convergence, that contains all constant functions and all limits of uniformly bounded increasing sequences of elements of $\mathbb{H}_{s,t}$. Moreover, $\{\mathbf{1}_{C} : C \in \mathscr{C}_t \}$ is closed under multiplication (since $\mathscr{C}_t$ is a $\pi$-system), and since $\omega^{\star} \in F$, we have $\{\mathbf{1}_{C} : C \in \mathscr{C}_t \} \subset \mathbb{H}_{s,t}$. Hence by the functional monotone class theorem (see e.g.\ Th\'eor\`eme I.21 of \cite{della}), $\mathbb{H}_{s,t}$ contains all bounded $\mathscr{F}_t$-measurable random variables. Thus, we find that $\mathbf{E}^{\star}\big[W^{\star}_{s+t}\big|\mathscr{F}_t\big] = W^{\star}_t$ for any $t\in [\ut,\infty) \cap \Q$ and $s \in \Q_+$. Now, let $t',t'' \in [\ut,\infty)$ be such that $t''>t'$, and let $(t''_n)\subset [t,t'') \cap \Q$ and $(t'_n) \subset [\ut,t') \cap \Q \cup \{ \ut \}$ be such that $t''_n \uparrow t''$ and $t'_n \uparrow t'$ respectively when $n \uparrow \infty$. Since by continuity of $X$ and $W$,
\begin{equation*}
\mathscr{F}_{t'} = \bigvee_{m=1}^{\infty} \mathscr{F}_{t'_m},
\end{equation*}
we have by Theorem 7.23 of \cite{kalle} and continuity of $W^{\star}$,
\begin{equation*}
\mathbf{E}^{\star}[W^{\star}_{t''_n}|\mathscr{F}_{t'}] = \lim_{m \rightarrow \infty} \mathbf{E}^{\star}[W^{\star}_{t''_n}|\mathscr{F}_{t'_m}] = \lim_{m \rightarrow \infty} W^{\star}_{t'_m} = W^{\star}_{t'} \quad \textrm{$\proba^{\star}$-a.s.}
\end{equation*}
Moreover, since $\mathbf{E}^{\star}[\sup_{n \in \N}|W^{\star}_{t''_n}|]<\infty$, the dominated convergence theorem for conditional expectations implies
\begin{equation*}
\mathbf{E}^{\star}[W^{\star}_{t''}|\mathscr{F}_{t'}] = \lim_{n \rightarrow \infty} \mathbf{E}^{\star}[W^{\star}_{t''_n}|\mathscr{F}_{t'}] = W^{\star}_{t'} \quad \textrm{$\proba^{\star}$-a.s.}
\end{equation*}
It follows now from standard results (e.g.\ Lemma II.72.2 of \cite{roger}) that $W^{\star}$ is an ($\mathbb{F}^{\star}$, $\proba^{\star}$)-martingale.

\emph{Step 2: Characterization of the support.}
To complete the proof, we shall now show that the law of $\widehat{Z}^{\ut}$ under the measure $\proba^{\star}$ is supported on $C_0([\ut,T])$. To this end, we adapt the method employed in the proof of Lemma 3.1 of \cite{stroo}. For the convenience of the reader, we do this in detail. 
Let us define
\begin{equation*}
Z^{\star}_t \eqdefl \int_{\underline{t}}^t \hat{k}_{s} \ud W^{\star}_s - \left\langle L, \int_{\underline{t}}^{\cdot} \hat{k}_{s} \ud W^{\star}_s \right\rangle_t, \quad t \in [\underline{t},\infty),
\end{equation*}
where
\begin{equation*}
\hat{k}_t \eqdefl k_t \mathbf{1}_{[\underline{t},T]}(t) + \mathbf{1}_{(T,\infty)}(t), \quad L_t \eqdefl \int_{\underline{t}}^t \hat{k}_s^{-1}(f'(s) - h_s\mathbf{1}_{[\underline{t},T]}(s)) \ud W^{\star}_s.
\end{equation*}
We can easily check that 
\begin{equation}\label{zidentity}
Z^{\star}_t = Z_t - Z_{\underline{t}}-f(t),\quad t \in [\underline{t},T].
\end{equation}
Further, we have for all $t \in [\underline{t},\infty)$ the uniform bound
\begin{equation*}
\begin{split}
\langle L, L \rangle_t  & = \int_{\underline{t}}^t \hat{k}_s^{-2}(f'(s) - h_s\mathbf{1}_{[\underline{t},T]}(s))^2 \ud s \\
& \leq 2 \left( \| f'\|^2_{\infty} \int_{\underline{t}}^T k^{-2}_s \ud s + \int_{\underline{t}}^T k^{-2}_s h^2_s \ud s\right),
 \end{split}
\end{equation*}
using which we obtain by the Cauchy--Schwarz inequality,
\begin{equation*}
\mathbf{E}^{\star}\Big[e^{\frac{1}{2} \langle L , L\rangle_{\infty}}\Big] \leq \sqrt{Y_1(\omega^{\star})Y_2 (\omega^{\star})} < \infty.
\end{equation*}
Consequently, Novikov's criterion implies that the Dol\'eans exponential $\mathscr{E}(L)_t \eqdefl e^{L_t - \frac{1}{2}\langle L, L \rangle_t}$, $t \in [\underline{t},\infty)$ is a uniformly integrable $(\mathbb{F}^{\star},\proba^{\star})$-martingale, closable at $\infty$. Hence, we may define a new probability measure $\qrob^{\star} \sim \proba^{\star}$ by $\qrob^{\star} \eqdefl \mathscr{E}(L)_{\infty} \cdot \proba^{\star}$.

Now, since $\int_{\underline{t}}^{\cdot} \hat{k}_{s} \ud W^{\star}_s$ is an $(\mathbb{F}^{\star},\proba^{\star}$)-martingale, by Girsanov's theorem $Z^{\star}$ is an $(\mathbb{F}^{\star},\qrob^{\star}$)-martingale. 
Moreover, we have
\begin{equation*}
\langle Z^{\star}, Z^{\star} \rangle_t = \int_{\underline{t}}^t \hat{k}^2_{s} \ud s \geq (t-T)_+ \xrightarrow[t \rightarrow \infty]{} \infty,
\end{equation*}
so by the Dambis, Dubins--Schwarz theorem, there exists a Brownian motion $(B_t)_{t \in [0,\infty)}$ under $\qrob^{\star}$, such that $Z^{\star}_t = B_{ \langle Z^{\star}, Z^{\star} \rangle_t}$, $t \in [\underline{t},\infty)$ $\qrob^{\star}$-a.s. Thus, we have
\begin{equation}\label{precfs}
\begin{split}
\qrob^{\star} \bigg[ \sup_{t \in [\underline{t},T]}|Z^{\star}_t| < \varepsilon \bigg] & = \qrob^{\star} \bigg[ \sup_{t \in [\underline{t},T]}|B_{\langle Z^{\star}, Z^{\star} \rangle_t}| < \varepsilon \bigg] \\
& \geq \qrob^{\star} \bigg[ \sup_{u \in [0,\overline{K}]}|B_u| < \varepsilon \bigg] > 0,
\end{split}
\end{equation}
since $\langle Z^{\star}, Z^{\star} \rangle_t \leq \overline{K}$ for all $t \in [\underline{t},T]$, and since the Wiener measure is supported on $C_0([0,\overline{K}])$ (see e.g.\ Corollary VIII.2.3 of \cite{revuz}). By the equivalence of the measures, we may substitute $\qrob^{\star}$ for $\proba^{\star}$ in (\ref{precfs}). Using the fact that $\proba^{\star}$ coincides with $\mu(\omega^{\star}, \, \cdot \, )$ on $\mathscr{F}$ and
\begin{equation*}
B' \eqdefl \bigg\{ \sup_{t \in [\underline{t},T]}|Z_t - Z_{\underline{t}} - f(t)| < \varepsilon \bigg\} \in \mathscr{F},
\end{equation*}
by (\ref{zidentity}) we have $\mu(\omega^{\star},B')>0$.
To conclude, note that functions having the properties of $f$ are dense in $C_0([\underline{t},T])$, so $Z$ has $\mathbb{F}$-CFS by Lemma \ref{smallballprobs}. Finally, $Z$ has $\mathbb{F}^Z$-CFS by Corollary \ref{smallerfilt} and Lemma \ref{usual}.
\end{proof}

\subsection{Independent integrands and general integrators}

Since Brownian motion has CFS, one might wonder if the preceeding results generalize to the case where the integrator is merely a continuous process with CFS. While the proofs of these results use quite heavily methods specific to Brownian motion (martingales, time changes), in the case independent integrands of finite variation \eqref{casegen} we are able to prove this conjecture.

\begin{thm}[Conditional full support]\label{gencfs}
Let $(H_t)_{t \in [0,T]}$ be a continuous process, $(k_t)_{t \in [0,T]}$ a process of finite variation, and $(X_t)_{t\in[0,T]}$ a continuous process independent of $(H,k)$. If $X$ has CFS and
\begin{equation}\label{nonsingularity}
\inf_{t\in[0,T]} |k_t| > 0 \quad \textrm{a.s.,} 
\end{equation}
then the process
\begin{equation}\label{genint}
Z_t \eqdefl H_t + \int_0^t k_s \ud X_s, \quad t \in [0,T]
\end{equation}
has CFS.
\end{thm}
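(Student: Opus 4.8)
The plan is to follow the conditioning philosophy of the proof of Theorem~\ref{indepcfs}, but to exploit the finite-variation hypothesis so as to turn the stochastic integral into a \emph{pathwise} object, thereby avoiding entirely the Brownian-specific machinery (martingales, time changes, Girsanov). Fix $\ut \in [0,T)$, $f \in C_0([\ut,T])$, and $\varepsilon > 0$. By Lemma~\ref{smallballprobs} it suffices to show that $\mathbf{E}\big[I\big(\widehat{Z}^{\ut},f,\varepsilon\big)\big|\tilde{\mathscr{F}}^Z_{\ut}\big] > 0$ a.s. Since $k$ is of finite variation and $X$ is continuous, for $s \leq \ut$ the variable $\int_0^s k_u\,\ud X_u$ is measurable with respect to $\sigma(k) \vee \tilde{\mathscr{F}}^X_s$, so that $\tilde{\mathscr{F}}^Z_{\ut} \subset \mathscr{G} \eqdefl \sigma(H,k) \vee \tilde{\mathscr{F}}^X_{\ut}$. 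By Lemma~\ref{positivece} it is therefore enough to establish the analogous positivity with $\mathscr{G}$ in place of $\tilde{\mathscr{F}}^Z_{\ut}$.

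Because $k$ has finite variation and $X$ is continuous, integration by parts yields the pathwise identity
\[
\int_{\ut}^t k_s\,\ud X_s = k_t\,\widehat{X}^{\ut}_t - \int_{\ut}^t \widehat{X}^{\ut}_s\,\ud k_s, \quad t \in [\ut,T],
\]
the right-hand integral being an ordinary Lebesgue--Stieltjes integral. Hence, for a fixed continuous path $h$ and a fixed finite-variation path $\kappa$, the map
\[
\Psi_{h,\kappa}(y)(t) \eqdefl h(t) - h(\ut) + \kappa(t)\,y(t) - \int_{\ut}^t y(s)\,\ud\kappa(s), \quad y \in C_0([\ut,T]),
\]
is affine and bounded, hence continuous, from $C_0([\ut,T])$ into itself, with modulus controlled by $\|\kappa\|_{\infty}$ and the total variation of $\kappa$. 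A freezing argument in the spirit of Lemma~\ref{freezing}, combined with the independence of $(H,k)$ and $X$ and the disintegration theorem, then shows that for $\proba$-a.a.\ $\omega$,
\[
\mathbf{E}\big[I\big(\widehat{Z}^{\ut},f,\varepsilon\big)\big|\mathscr{G}\big](\omega) = \hat{\mu}_{\omega}\big(\Psi_{H(\omega),k(\omega)}^{-1}(B(f,\varepsilon))\big),
\]
where $\hat{\mu}_{\omega}$ denotes the regular conditional law of $\widehat{X}^{\ut}$ given $\tilde{\mathscr{F}}^X_{\ut}$; here independence is what guarantees that conditioning additionally on $\sigma(H,k)$ leaves this conditional law unchanged. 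By continuity of $\Psi_{H(\omega),k(\omega)}$, the set $\Psi_{H(\omega),k(\omega)}^{-1}(B(f,\varepsilon))$ is open in $C_0([\ut,T])$.

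It remains to verify that this open set is nonempty for a.a.\ $\omega$, and then to invoke the CFS of $X$: by the shift argument \eqref{shiftsupport} the support of $\hat{\mu}_{\omega}$ is all of $C_0([\ut,T])$ almost surely, so $\hat{\mu}_{\omega}$ charges every nonempty open set. To exhibit a point of the set, fix a realization with $\kappa \eqdefl k(\omega)$ of finite variation and $\inf_t |\kappa_t| > 0$, write $h \eqdefl H(\omega)$, and set $\tilde{f} \eqdefl f - (h(\cdot) - h(\ut)) \in C_0([\ut,T])$. Approximate $\tilde{f}$ uniformly to within $\varepsilon$ by a continuously differentiable $\tilde{f}_{\delta}$ vanishing at $\ut$, and define
\[
y_0(t) \eqdefl \int_{\ut}^t \kappa(s)^{-1}\,\tilde{f}_{\delta}'(s)\,\ud s, \quad t \in [\ut,T].
\]
Since $\inf_t |\kappa_t| > 0$, the function $\kappa^{-1}$ is bounded, so $y_0$ is well defined, absolutely continuous, and lies in $C_0([\ut,T])$; moreover $\int_{\ut}^t \kappa\,\ud y_0 = \int_{\ut}^t \tilde{f}_{\delta}'\,\ud s = \tilde{f}_{\delta}(t)$, whence $\Psi_{h,\kappa}(y_0) = f - \tilde{f} + \tilde{f}_{\delta}$ remains within $\varepsilon$ of $f$ in sup-norm. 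Thus the open set is nonempty, $\hat{\mu}_{\omega}$-positivity follows, and the proof concludes via Lemma~\ref{smallballprobs}, Corollary~\ref{smallerfilt}, and Lemma~\ref{usual}.

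The main obstacle will be the rigorous execution of the freezing step: one must produce a jointly measurable version of the pathwise functional $\Psi_{H,k}$ and verify, through the disintegration theorem, that the conditional expectation is indeed identified with $\hat{\mu}_{\omega}(\Psi^{-1}(B(f,\varepsilon)))$ for a.a.\ $\omega$. This is precisely the place where the \emph{independence} of $(H,k)$ and $X$ is indispensable rather than mere adaptedness, as Examples~\ref{nocfs1}--\ref{nocfs2} show. Everything else is soft: the finite-variation hypothesis removes all stochastic-analytic difficulties by rendering the integral pathwise, while the bound $\inf_t |k_t| > 0$ enters exactly through the inversion of $k$ in the construction of the target path $y_0$.
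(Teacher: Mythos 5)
Your proposal is correct and coincides in essence with the paper's own proof: both pass to the enlarged $\sigma$-algebra $\sigma\{H_s,k_s : s\in[0,T]\}\vee\tilde{\mathscr{F}}^X_{\ut}$, use the independence of $(H,k)$ and $X$ together with the disintegration theorem to reduce everything to the full support of the conditional law of $\widehat{X}^{\ut}$, and control the pathwise map $y\mapsto \kappa y-\int y\,\ud\kappa$ by the same integration-by-parts bound $\sup_s|k_s|+\mathrm{TV}_{[\ut,T]}(k)$. The only difference is one of execution: the paper exhibits an exact preimage of $f$, namely $g_t\eqdefl\int_{\ut}^t k_s^{-1}\,\ud\big(f(s)+H_{\ut}-H_s\big)$ (a pathwise Stieltjes integral, meaningful since $k^{-1}$ inherits finite variation from \eqref{nonsingularity}), and concludes via the resulting event inclusion, whereas you construct an approximate preimage $y_0$ by $C^1$-smoothing of $f-(H-H_{\ut})$ and invoke openness of $\Psi_{H,k}^{-1}(B(f,\varepsilon))$ --- two interchangeable ways of showing that the same open set is nonempty and hence charged by the conditional law.
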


\begin{rem}
The stochastic integral in \eqref{genint} exists as a \emph{pathwise} Riemann--Stieltjes integral. This well-known fact follows from the (discrete) integration-by-parts formula.
\end{rem}

\begin{proof}
First, denote by $\mathbb{G} = (\mathscr{G}_t)_{t \in [0,T]}$ the filtration given by $\mathscr{G}_{t} \eqdefl \tilde{\mathscr{F}}^X_{t} \vee \sigma \{H_s, k_s : s \in [0,T]\}$, and let $\underline{t} \in [0,T)$, $f \in C_0([\underline{t},T])$, and $\varepsilon > 0$.  Further, define 
\begin{equation*}
g_t \eqdefl \int_0^t k^{-1}_s \ud (f(s)+H_{\underline{t}} - H_s), \quad t \in [\ut,T],
\end{equation*}
which is well defined since $(k^{-1}_t)_{t \in [\ut,T]}$ is of finite variation, by \eqref{nonsingularity}. 
Integration by parts and the Love--Young inequality yield for all $t \in [\underline{t},T]$,
\begin{equation*}
\begin{split}
|Z_t - Z_{\underline{t}}-f(t)| & = \left|\int_{\underline{t}}^t k_s \ud (X_s - X_{\underline{t}}-g_s) \right|\\
 & = \left|k_t (X_t - X_{\underline{t}}-g_t) - \int_{\underline{t}}^t (X_s - X_{\underline{t}}- g_s) \ud k_s \right| \\
&  \leq \underbrace{\bigg(\sup_{t \in [\underline{t},T]} |k_s| + \mathrm{TV}_{[\underline{t},T]}(k)\bigg)}_{\eqdefr M(k)} \cdot \sup_{t \in [\underline{t},T]} |X_t - X_{\underline{t}} - g_t|,
\end{split}
\end{equation*}
where $\mathrm{TV}_{[\underline{t},T]}(k)$ denotes the total variation of the path of $k$ on the interval $[\underline{t},T]$. This estimate implies the inclusion
\begin{equation*}
\bigg\{\sup_{t \in [\underline{t},T]} |X_t - X_{\underline{t}}-g_t| < \frac{\varepsilon}{2 M(k)}  \bigg\} \subset \bigg\{ \sup_{t \in [\underline{t},T]}|Z_t - Z_{\underline{t}}-f(t)| < \varepsilon \bigg\}.
\end{equation*}
Hence, by monotonicity of conditional expectations, and by Lemmas \ref{smallballprobs}, \ref{positivece}, and \ref{usual}, it suffices to show that $\mathbf{E}\big[I\big(\widehat{X}^{\ut}, g, \varepsilon / (2 M(k))\big) \big| \mathscr{G}_{\underline{t}}\big]>0$ a.s. To this end, note that since $X$ is independent of $H$ and $k$, it has $\mathbb{G}$-CFS, and denote by $\nu$ some regular $\mathscr{G}_{\underline{t}}$-conditional law of $\widehat{X}^{\ut}$. The disintegration theorem (Theorem 6.4 of \cite{kalle}) yields now
\begin{equation*}
\mathbf{E}\big[I\big(\widehat{X}^{\ut}, g, \varepsilon / (2 M(k))\big) \big| \mathscr{G}_{\underline{t}}\big] = \int_{C([\underline{t},T])} \mathbf{1}_{B(g, \varepsilon/(2M(k)))}(x) \nu( \, \cdot \,, \ud x) > 0 \quad \textrm{a.s.,}
\end{equation*}
since $X$ has $\mathbb{G}$-CFS.\end{proof}

\section{Applications}\label{applications}

In this section, we establish CFS for certain price processes used in mathematical finance. The main motivation for these applications stems from the desire to uncover more \emph{concrete} price processes to which the  superreplication result in \cite{guaso2} applies.

\subsection{Stochastic volatility models} 

Let us consider a price process $(P_t)_{t \in [0,T]}$ in $\R_+$ defined by
\begin{equation}\label{genstochvol}
\ud P_t = P_t \big(f(t,V_t) \ud t +   \rho g(t,V_t) \ud B_t+\sqrt{1- \rho^2} g(t,V_t) \ud W_t\big), \quad P_0 = p_0 \in \R_+, 
\end{equation}
where $f,g \in C([0,T]\times\R^d)$, $\rho \in (-1,1)$, $(W,B)$ is a planar Brownian motion, $V$ is a process in $\R^d$ such that $g(t,V_t) > 0$ a.s.\ for all $t \in [0,T]$. Further, we assume that $(B,V)$ is independent of $W$. We may now verify that $P$ has CFS, since by positivity of $P$, It\^o's formula yields
\begin{equation*}
\begin{split}
\log P_t & = \log p_0 + \int_0^t \bigg(f(s,V_s) - \frac{1}{2} g(s,V_s)^2\bigg) \ud s + \rho \int_0^t g(s,V_s) \ud B_s \\
& \quad +\sqrt{1- \rho^2} \int_0^t g(s,V_s) \ud W_s,
\end{split}
\end{equation*}
which clearly satisfies the assumptions of Theorem \ref{indepcfs} (See Remark \ref{simplenondeg}), and finally we may invoke \eqref{gendomain}.

Let us review briefly some well-known special cases of \eqref{genstochvol}. In standard Markovian stochastic volatility models, the process $V$ is a one-dimensional diffusion driven by $B$, that is
\begin{equation*}
\ud V_t = \alpha(t,V_t) \ud t + \beta(t,V_t) \ud B_t, \quad V_0 = v_0 \in \R.
\end{equation*}
The popular models introduced by Heston (with leverage effects when $-1<\rho<0$); Hull and White; E.\ M.\ Stein, J.\ C.\ Stein and Scott; and Wiggins are special cases of \eqref{genstochvol}, see e.g.\ \cite{frey} for details. 
One notable special case of \eqref{genstochvol}, in which $V$ is not Markovian, is the model of Comte and Renault \cite{comte}, which was designed to capture long-memory effects in volatility. It can be obtained by setting $g(t,v) \eqdefl \e^v$, $\rho \eqdefl 0$, and choosing $V$ to be a fractional Ornstein--Uhlenbeck process independent of $W$. 

Special cases of \eqref{genstochvol} in which volatility may jump include the stochastic volatility model of Barndorff--Nielsen and Shephard \cite{barnd} and the regime switching model of Guo \cite{guo}. To see why in the former model we have $g(t,V_t) > 0$ a.s.\ for all $t \in [0,T]$, recall that we obtain the model by specifying
\begin{equation}\label{gou}
V_t \eqdefl \int_{-\infty}^t \e^{-\lambda (t-s)}  \ud L_{\lambda s}, \quad t \in [0,T],
\end{equation}
where $(L_t)_{t \in \R}$ is an increasing L\'evy process (a \emph{subordinator}) without drift, such that $L_0 = 0$ a.s., independent of $W$, $\rho\eqdefl0$, and $g(t,v)\eqdefl\sqrt{v}$. To exclude the uninteresting case with zero volatility (when $P$ of course does not have CFS), let us assume that $\proba[L_1 = 0]<1$. We find that $V_t \geq \e^{-\lambda T} V_0$ for all $t \in [0,T]$
since $L$ is increasing. Further, using stationarity and independence of the increments of $L$, we obtain for all $k \in \mathbb{Z}_{+}$,
\begin{equation*}
\proba[V_0 = 0] \leq \proba[L_{-i+1}-L_{-i}=0, \ \forall i = 1,\ldots,k] = \proba[L_1= 0]^k,
\end{equation*}
and letting $k \rightarrow \infty$ we find that $V_0>0$ a.s., from which the desired property follows.

\begin{rem}\label{abscont}
The absolute continuity of the drift of $P$ is of course not necessary. Namely, we can easily establish CFS similarly e.g.\ for any price process $P_t \eqdefl e^{f(t) + (g(V)\cdot W)_t}$, $t \in [0,T]$, where $f$ is an arbitrary continuous function. For certain choices of $f$, process $P$ is known to admit arbitrage opportunities in frictionless pricing models (see \cite{delba}). 
\end{rem}

\subsection{Stochastic differential equations}

Finally, let us consider a price process $(P_t)_{t \in [0,T]}$ in $\R_+$ given by stochastic differential equation
\begin{equation}\label{pricesde}
\ud P_t = \mu(t,P) \ud t + \sigma(t, P) \ud W_t, \quad P_0 = p_0 \in \R_+,
\end{equation}
where $\mu$ and $\sigma$ are progressive functions such that for some constants $\overline{\mu} > 0$ and $\overline{\sigma}>1$, 
\begin{equation}\label{musigmabounds}
|\mu(t,x)| \leq \overline{\mu} x(t), \quad \overline{\sigma}^{-1} x(t) \leq |\sigma(t,x)| \leq \overline{\sigma} x(t), \quad x \in C_{p_0}([0,T],\R_+), \, t \in [0,T].
\end{equation}
Further, we assume $\mu$ and $\sigma$ are such that \eqref{pricesde} has a weak solution, which by definition means that there exists a filtration $\mathbb{F}=(\mathscr{F}_t)_{t \in [0,T]}$ and $\mathbb{F}$-adapted continuous processes $P$ and $W$ that solve \eqref{pricesde}, such that $W$ is an $\mathbb{F}$-Brownian motion. Clearly, we may assume that $\mathscr{F}_t = \sigma \{P_s,W_s : s \in [0,t] \}$. Now, since $P$ is positive, It\^o's formula yields
\begin{equation*}
\log P_t = \log p_0 + \int_0^t \bigg(\frac{\mu(s,P)}{P_s} - \frac{\sigma^2(s,P)}{2P^2_s} \bigg) \ud s + \int_0^T \frac{\sigma(s,P)}{P_s} \ud W_s. 
\end{equation*}
Setting $X \eqdefl P$ and noting that \eqref{musigmabounds} implies the conditions of Theorem \ref{progcfs}, by \eqref{gendomain} it follows that $P$ has CFS.

\section*{Acknowledgements}

I would like to thank Tommi Sottinen for introducing me to the topic and for helpful discussions. I also thank Esa Nummelin for reading previous drafts of this paper and for constant encouragement. Finally, I thank Boualem Djehiche, Dario Gasbarra, and Esko Valkeila for valuable comments.


\begin{thebibliography}{10}

\bibitem{barnd}
{\sc O.~E. Barndorff-Nielsen and N.~Shephard}, {\em Non-{G}aussian
  {O}rnstein-{U}hlenbeck-based models and some of their uses in financial
  economics}, J. R. Stat. Soc. Ser. B Stat. Methodol., 63 (2001), pp.~167--241.

\bibitem{bende}
{\sc C.~Bender, T.~Sottinen, and E.~Valkeila}, {\em Pricing by hedging and
  no-arbitrage beyond semimartingales}, Finance Stoch., 12 (2008),
  pp.~441--468.

\bibitem{cheri}
{\sc P.~Cheridito}, {\em Mixed fractional {B}rownian motion}, Bernoulli, 7
  (2001), pp.~913--934.

\bibitem{chern}
{\sc A.~Cherny}, {\em Brownian moving averages have conditional full support},
  Ann. Appl. Probab., 18 (2008), pp.~1825--1830.

\bibitem{comte}
{\sc F.~Comte and E.~Renault}, {\em Long memory in continuous-time stochastic
  volatility models}, Math. Finance, 8 (1998), pp.~291--323.

\bibitem{delba}
{\sc F.~Delbaen and W.~Schachermayer}, {\em The existence of absolutely
  continuous local martingale measures}, Ann. Appl. Probab., 5 (1995),
  pp.~926--945.

\bibitem{della}
{\sc C.~Dellacherie and P.-A. Meyer}, {\em Probabilit\'es et potentiel,
  Chapitres I {\`a} IV}, Hermann, Paris, 1975.

\bibitem{frey}
{\sc R.~Frey}, {\em Derivative asset analysis in models with level-dependent
  and stochastic volatility}, CWI Quarterly, 10 (1997), pp.~1--34.

\bibitem{gasba}
{\sc D.~Gasbarra, T.~Sottinen, and H.~van Zanten}, {\em Conditional full
  support of {G}aussian processes with stationary increments}, Preprint 487,
  University of Helsinki, Department of Mathematics and Statistics, 2008.

\bibitem{guaso2}
{\sc P.~Guasoni, M.~R{\'a}sonyi, and W.~Schachermayer}, {\em Consistent price
  systems and face-lifting pricing under transaction costs}, Ann. Appl.
  Probab., 18 (2008), pp.~491--520.

\bibitem{guo}
{\sc X.~Guo}, {\em An explicit solution to an optimal stopping problem with
  regime switching}, J. Appl. Probab., 38 (2001), pp.~464--481.

\bibitem{jeuli}
{\sc T.~Jeulin and M.~Yor}, {\em In\'egalit\'e de {H}ardy, semimartingales, et
  faux-amis}, in S\'eminaire de {P}robabilit\'es, {XIII} ({U}niv. {S}trasbourg,
  {S}trasbourg, 1977/78), vol.~721 of Lecture Notes in Math., Springer, Berlin,
  1979, pp.~332--359.

\bibitem{kaban}
{\sc Y.~Kabanov and C.~Stricker}, {\em On martingale selectors of cone-valued
  processes}, in S\'eminaire de {P}robabilit\'es, {XLI}, vol.~1934 of Lecture
  Notes in Math., Springer, Berlin, 2008, pp.~439--443.

\bibitem{kalle}
{\sc O.~Kallenberg}, {\em Foundations of modern probability}, Probability and
  its Applications (New York), Springer-Verlag, New York, second~ed., 2002.

\bibitem{kalli}
{\sc G.~Kallianpur}, {\em Abstract {W}iener processes and their reproducing
  kernel {H}ilbert spaces.}, Z. Wahrscheinlichkeitstheorie und Verw. Gebiete,
  17 (1971), pp.~113--123.

\bibitem{mille}
{\sc A.~Millet and M.~Sanz-Sol{\'e}}, {\em A simple proof of the support
  theorem for diffusion processes}, in S\'eminaire de {P}robabilit\'es,
  {XXVIII}, vol.~1583 of Lecture Notes in Math., Springer, Berlin, 1994,
  pp.~36--48.

\bibitem{revuz}
{\sc D.~Revuz and M.~Yor}, {\em Continuous martingales and {B}rownian motion},
  vol.~293 of Grundlehren der Mathematischen Wissenschaften [Fundamental
  Principles of Mathematical Sciences], Springer-Verlag, Berlin, third~ed.,
  1999.

\bibitem{roger}
{\sc L.~C.~G. Rogers and D.~Williams}, {\em Diffusions, {M}arkov processes, and
  martingales. {V}ol. 2: It{\^o} calculus}, Cambridge Mathematical Library,
  Cambridge University Press, Cambridge, 2000.

\bibitem{stroo2}
{\sc D.~W. Stroock}, {\em On the growth of stochastic integrals}, Z.
  Wahrscheinlichkeitstheorie und Verw. Gebiete, 18 (1971), pp.~340--344.

\bibitem{stroo}
{\sc D.~W. Stroock and S.~R.~S. Varadhan}, {\em On the support of diffusion
  processes with applications to the strong maximum principle}, in Proc.
  {S}ixth {B}erkeley {S}ymp. {M}ath. {S}tatist. and {P}robab. {III}:
  {P}robability {T}heory, Berkeley, Calif., 1972, Univ. California Press,
  pp.~333--359.

\end{thebibliography}
\end{document}